\newtheorem{theorem}{Theorem}[section]
\newtheorem{remark}{Remark}[section]
\newtheorem{lemma}{Lemma}[section]
\newcommand{\ul}[1]{\underline{#1}}
\newcommand{\ol}[1]{\overline{#1}}
\newfont{\smoldita}{cmmib8}
\newfont{\boldita}{cmmib10}
\newfont{\bboldita}{cmmib10 scaled\magstep1}
\newcommand{\mbb}[1]{\mathbb{#1}}
\newcommand{\mb}[1]{\mathbf{#1}}
\newcommand{\init}[1]{\stackrel{\circ}{#1}}
\newcommand{\nn}{\nonumber}
\newcommand{\e}{\epsilon}
\newcommand{\bom}{\mbox{\boldmath $\omega$}}
\newcommand{\ti}[1]{\tilde{#1}}
\newcommand{\ov}[1]{\overline{#1}}
\newcommand{\mc}[1]{\mathcal{#1}}
\newcommand{\comment}[1]{}
\journal{}
\begin{document}
\begin{frontmatter}
\title{Delayed stability switches in singularly perturbed predator-prey models}
\author[a,b]{J. Banasiak}  \author[c] {M. S. Seuneu Tchamga}
\address[a] {Department of Mathematics and Applied Mathematics, University of Pretoria, Pretoria, South Africa,  e-mail:
jacek.banasiak@up.ac.za} \address[b] {Institute of Mathematics,
Technical University of \L\'{o}d\'{z}, \L\'{o}d\'{z}, Poland}
 \address [c]  {School of Mathematical Sciences, University of
KwaZulu-Natal, Durban 4041, South Africa,  e-mail: 214539722@ukzn.ac.za}
%\address [d]  {Institute of Applied Mathematics and Mechanics, University of Warsaw, Warsaw, Poland,  e-mail: lachowic@mimuw.edu.pl}

\begin{abstract}
In this paper we provide an elementary proof of the existence of canard solutions for a class of singularly perturbed predator-prey planar systems in which there occurs a transcritical bifurcation of quasi steady states. The proof uses a one-dimensional theory of canard solutions developed by V. F. Butuzov, N. N. Nefedov and K. R. Schneider, and an appropriate monotonicity assumption on the vector field to extend it to the two-dimensional case. The result is applied to identify all possible predator-prey models with quadratic vector fields allowing for the existence of canard solutions. \end{abstract}

\begin{keyword}
 Singularly perturbed dynamical systems, multiple time scales, Tikhonov theorem,  delayed stability switch, non-isolated quasi steady states, predator-prey models, canard solutions.

\textit{2010 MSC}:     34E15, 34E17, 92D40.
\end{keyword}
\end{frontmatter}
\section{Introduction}
In many multiple scale problems; that is,  the problems in which processes occurring at vastly different rates coexist, the presence of such rates  is manifested by the presence of a small (or large) parameter which expresses the ratio of the intrinsic time units  of these processes. Mathematical modelling of such processes often leads to singularly perturbed systems of the form
 \begin{eqnarray}
{\mb x}' & = & \mb f(t,\mb x,\mb y,\e),\qquad \mb x(0)  = \mathring{ \mb x},\nn\\
\epsilon {\mb y}' & = & \mb g(t,\mb x,\mb y,\e),\qquad \mb y(0)  =  \,\mathring{\mb y},\label{(iss)}
\end{eqnarray}
where  $\mb f$ and $\mb g$ are sufficiently regular functions from open subsets of $\mbb R\times \mbb R^n\times \mbb R^m \times \mbb R_+$ to, respectively, $\mbb R^n$ and $\mbb R^m,$ for some $n,m\in \mbb N$. It is of interest to determine the behaviour of solutions to (\ref{(iss)}) as $\epsilon\to 0$ and, in particular, to show that they converge to solutions of the reduced system obtained from (\ref{(iss)}) by letting $\e=0$. There are several reasons for this. First, taking such a limit in some sense `incorporates' fast processes into the slow dynamics and hence links models acting at different time scales, often leading to new descriptions of nature, see e.g. \cite{BaLa14}. Second, letting formally $\e =0$ in (\ref{(iss)}) lowers the order of the system and hence reduces its computational complexity by offering an approximation that retains the main dynamical features of the original system. In other words, often the qualitative properties of the reduced system with $\e>0$ can be `lifted' to $\e>0$ to provide a good description of dynamics of (\ref{(iss)}).

The first systematic analysis of problems of the form (\ref{(iss)}) was presented by A.N. Tikhonov in the 40' and this theory, with corrections due to F. Hoppenstead, can be found in e.g. \cite{BaLa14, Ho, tivasv}. Later, a parallel theory based on the center manifold theory was given by F. Fenichel \cite{Fe} and a reconciliation of these two theories can be found in \cite{Sa}. To introduce the main topic of this paper one should understand the main features of either theory and, since our work is more related to the Tikhonov approach, we shall focus on presenting the basics of   it.

Let $\mb{\bar y}(t, \mb x)$ be the solution to the  equation
\begin{equation}
0 =  \mb g(t,\mb x,\mb y,0),
\label{deg1}
\end{equation}
often called the \textit{quasi steady state}, and $\mb{\bar x}(t)$ be the solution to
\begin{equation}
{\mb x}'  =  \mb f(t,\mb x,\mb{\bar y}(t, \mb x),0), \qquad \mb x(0)  =  \,\mathring{\mb x}.
\label{deg2}
\end{equation}
 We assume that $\bar{\mb  y}$ is an isolated solution to (\ref{deg1}) in some set $[0,T]\times \bar{\mc U}$  and that it is a uniformly, in  $(t,\mb x) \in [0,T]\times \bar{\mc U},$  asymptotically stable equilibrium of
 \begin{equation}\label{auxil'}
\frac{{d}\,\tilde{\mb y}}{{d}\,\tau} = \mb g(t,\mb x,\tilde{\mb y},0)\,,
\end{equation}
 where here $(t,\mb x)$ are treated as parameters.  Further,  assume that $\bar{\mb  x}(t) \in \mc U$ for $t\in [0,T]$ provided $\,\mathring{\mb x} \in \bar{\mc U}$ and that $\mathring {\mb y}$ is in the basin of attraction of $\mb{\bar y}$.
\begin{theorem}\label{tikhonovth'}
Let the above assumptions be satisfied. Then
there exists $\varepsilon_0 >0$ such that for any $\varepsilon\in\, ]\,0,\varepsilon_0 ]$ there
exists a unique solution
$(\mb x_{\varepsilon} (t),\mb y_{\varepsilon} (t))$ of (\ref{(iss)}) on $[0,T ]$ and
\begin{eqnarray}
\lim\limits_{\varepsilon\to 0} \mb x_{\varepsilon} (t) & =&\bar{\mb x}(t),\qquad t\in [0,T ]\,, \nn\\
\lim\limits_{\varepsilon\to  0} \mb y_{\varepsilon} (t) & =& \bar{\mb y}(t),
\qquad  t\in \, ]\, 0,T ]\,,\label{tikhonovcon'}
\end{eqnarray}
where $\bar{\mb x}(t)$ is the solution of  (\ref{deg2}) and $\bar{\mb y}(t)= \mb{\bar y}(t, \bar{\mb x}(t))$ is the solution of (\ref{deg1}).
\end{theorem}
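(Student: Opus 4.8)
The plan is to prove Tikhonov's theorem by the classical two-layer construction: first analyze the fast (boundary-layer) dynamics to absorb the mismatch between $\mathring{\mb y}$ and $\bar{\mb y}(0,\mathring{\mb x})$, then show that after a short time the trajectory is trapped in a tube around the slow manifold $\mb y = \bar{\mb y}(t,\mb x)$, and finally invoke continuous dependence for the slow equation. First I would introduce the fast time $\tau = t/\e$ and the deviation $\mb z = \mb y - \bar{\mb y}(t,\mb x)$, rewriting \eqref{(iss)} as a system for $(\mb x, \mb z)$ in which $\mb z' = \e^{-1}\mb g(t,\mb x,\bar{\mb y}+\mb z,\e) - (\p_t\bar{\mb y} + \p_{\mb x}\bar{\mb y}\cdot\mb x')$. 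The uniform asymptotic stability hypothesis on \eqref{auxil'}, together with smoothness of $\mb g$, gives (via a converse Lyapunov theorem, or directly via the assumed exponential estimate for the linearization $\p_{\mb y}\mb g(t,\mb x,\bar{\mb y},0)$) a Lyapunov function $V(t,\mb x,\mb z)$ with $\dot V \le -c V$ along \eqref{auxil'} and $c_1|\mb z|^2 \le V \le c_2|\mb z|^2$, uniformly in the parameters on $[0,T]\times\bar{\mc U}$.

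The core estimate is then a differential inequality: along solutions of the full system, $\frac{d}{dt}V \le -\frac{c}{\e}V + C(|\mb z| + \e)$, where the extra term collects the drift of $\bar{\mb y}$ along the slow flow and the $O(\e)$ perturbation. A Gronwall-type argument on this inequality shows that $|\mb z(t)|$ decays like $e^{-ct/\e}$ until it reaches an $O(\e)$ neighborhood of $0$, and stays there; this is where one has to be careful that the solution does not leave $\bar{\mc U}$ before the estimate kicks in, which is handled by a continuity/bootstrap argument choosing $\e_0$ small and using that $\mathring{\mb y}$ lies in the basin of attraction (so one may first need a crude a priori bound on the fast layer using the basin-of-attraction hypothesis before the Lyapunov estimate applies near $\bar{\mb y}$). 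Simultaneously, the $\mb x$-equation reads $\mb x' = \mb f(t,\mb x,\bar{\mb y}(t,\mb x)+\mb z,\e)$, which is an $O(|\mb z|+\e)$ perturbation of \eqref{deg2}; since the reduced solution $\bar{\mb x}(t)$ stays in the interior $\mc U$ on $[0,T]$ and $\mb f$ is Lipschitz, Gronwall gives $|\mb x_\e(t) - \bar{\mb x}(t)| \le C\int_0^t(|\mb z(s)|+\e)\,ds = O(\e)$ uniformly on $[0,T]$, which also justifies a posteriori that $\mb x_\e$ remains in $\mc U$.

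Putting the two pieces together yields $\mb x_\e \to \bar{\mb x}$ uniformly on $[0,T]$ and $\mb y_\e \to \bar{\mb y}$ uniformly on every $[\delta,T]$ with $\delta>0$ (the exclusion of $t=0$ being intrinsic, since $\mb y_\e(0)=\mathring{\mb y}\ne\bar{\mb y}(0,\mathring{\mb x})$ in general). Existence and uniqueness of $(\mb x_\e,\mb y_\e)$ on the whole interval $[0,T]$ follows from local existence for the ODE plus the a priori bounds just obtained, which prevent blow-up or escape from the domain before time $T$. The main obstacle is the bootstrap/trapping step: one must show the solution enters the region where the Lyapunov estimate is valid before it can escape $\bar{\mc U}$, and then that it cannot subsequently leave; this requires quantifying "basin of attraction" into a concrete invariant neighborhood and choosing $\e_0$ (depending on $T$, the Lyapunov constants, the Lipschitz constants of $\mb f,\mb g,\bar{\mb y}$, and the distance from $\bar{\mb x}([0,T])$ to $\p\mc U$) accordingly. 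Everything else is a routine Gronwall estimate.
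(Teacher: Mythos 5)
This theorem is not proved in the paper at all: it is recalled as the classical Tikhonov theorem, with the proof delegated to the cited references (Hoppensteadt, Tikhonov--Vasilyeva--Sveshnikov, Banasiak--Lachowicz). Your sketch follows exactly the standard route used in those sources --- boundary-layer analysis in the fast time $\tau=t/\e$ to absorb the mismatch between $\mathring{\mb y}$ and $\bar{\mb y}(0,\mathring{\mb x})$, a Lyapunov-function trapping argument to keep $\mb z=\mb y-\bar{\mb y}(t,\mb x)$ in a shrinking tube around the quasi steady state, and Gronwall/regular-perturbation for the slow variable --- so there is no methodological divergence to report. One point where your write-up over-claims: the hypothesis is \emph{uniform asymptotic stability} of $\bar{\mb y}$ for \eqref{auxil'}, not exponential stability, so a quadratic Lyapunov function with $\dot V\le -cV$ (and the ensuing $e^{-ct/\e}$ decay and $O(\e)$ rates) is not directly available; the converse Lyapunov theorem for uniform asymptotic stability only yields $\dot V\le -W(\mb z)$ with $W$ positive definite, and the trapping/Gronwall step must be run with this weaker decay (which still suffices for the stated limits, but not for the $O(\e)$ error bound you assert for $\mb x_\e-\bar{\mb x}$). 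With that adjustment the argument is the standard one and is sound.
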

We emphasize that the main condition for the validity of the Tikhonov theorem are that the quasi steady state be isolated and attractive; the latter in the language of dynamical systems is referred to as hyperbolicity. In applications, however, we often encounter the situation when either the quasi steady state ceases to be hyperbolic along some submanifold (a fold singularity), or two (or more) quasi steady states  intersect. The latter typically involves the so called `exchange of stabilities' as in the transcritical bifurcation theory: the branches of the quasi steady states change from being attractive to being repelling (or conversely)  across the intersection. The assumptions of the Tikhonov theorem fail to hold in the neighbourhood of the intersection, but it is natural to expect that any solution that passes close to it follows the attractive branches of the quasi steady states on either side of the intersection. Such a behaviour is, indeed, often observed, see e.g. \cite{but2004, LS1, LS2}. However, in many cases an unexpected behaviour of the solution is observed --- it follows the attracting part of one of the quasi steady states and, having passed the intersection, it continues along the now repelling branch of it for some prescribed time and only then jumps to the attracting part of the other quasi steady state. Such a behaviour, called the delayed switch of stability,  was first observed in \cite{Shi} (and explained in \cite{Nei}) in the case of a pitchfork bifurcation, in which an attracting quasi steady state produces two new attracting branches while itself continues as a repelling one. The delayed switch of stabilities for a fold singularity was observed in the van der Pol equation and have received  explanations based on methods  ranging from  nonstandard analysis \cite{BeCa} to classical asymptotic analysis \cite{E1}; solutions displaying such a behaviour were named \textit{canard solutions}. In this paper we shall focus on the so called transcritical bifurcation, in which two quasi steady states intersect and exchange stabilities at the intersection; here the delayed switch was possibly first observed in \cite{Hab} and analysed in \cite{Sch}.

The interest in problems of this type partly stems from the applications to determine slow-fast oscillations \cite{E1, Hek, Jo,  Mis, Rin1},  where the intersecting quasi steady states are used to prove the existence of cycles in the original problem and to approximate them. Another application is in the bifurcation theory, where the bifurcation parameter is driven by another, slowly varying, equation coupled to the original system \cite{BoTe1, BoTe2}. In both cases using the naive approximation of the true solutions by a solution lying on the quasi steady states, without taking into account the possibility of the delay in the stability switch, results in a serious under-, or overestimate of the real dynamics of the system, see e.g. \cite{BoTe1, BoTe2, Rin1}.

As we mentioned earlier, there is a rich literature concerning these topics and we do not claim that our paper offers significantly new theoretical results. However, by employing a monotonic structure of the equations and combining it with the method of upper and lower solution of \cite{but2004} we have managed to give a constructive and rather elementary proof of the existence of the delayed stability switch for a large class of planar systems including, in particular, predator-prey models with quadratic vector field.  As a by-product of the method, we also provided results on immediate stability switch. Here, our results pertain to a different class of problems than that considered in e.g. \cite{LS1, LS2} but, when applied to the predator-prey system, they give the same outcome. As an added benefit of our approach we mention that, in contrast to the papers based on the orbit analysis, e.g. \cite{Rin1}, we are able to give the precise value of time at which the stability switch occurs.  Finally we note that, for completeness, we only proved the results for planar systems. Some of them, however, can be extended to multidimensional systems, \cite{BaK2}.

The paper is structured as follows. In Section \ref{sec1} we recall the one-dimensional delayed stability switch theorem of \cite{but2004} and we formulate and prove its counterpart when the stability of quasi steady states is reversed. Section \ref{sec2} contains the main results of the paper. In Theorem \ref{th21} we prove the existence of the delayed switch for a general predator-prey type model. Theorem \ref{th3.2} shows the convergence of the solution to the second quasi steady state after the switch. Finally, in Theorem \ref{th33} we give conditions ensuring an immediate stability switch. In Section \ref{sec3} we apply these theorems to identify the cases of the delayed and immediate stability switches in classical predator-prey models. Finally, in Appendix we provide a sketch of the proof of the Butuzov \textit{et. al} result with some amendments necessary for our considerations.

\textbf{Acknowledgement.} The research of J.B. has been supported by the National Research Foundation CPRR13090934663. The results are part of Ph.D. research of M.S.S.T., supported by DAAD.

\section{Preliminary results}
\label{sec1}
\subsection{The one-dimensional result}
In this section we shall recall a  result on the delayed stability switch in a one dimensional case, given by V. F. Butuzov et.al., \cite{but2004}.
Let us consider a singularly perturbed scalar differential equation.
\begin{eqnarray}\label{eq1}
\epsilon \frac{dy}{dt} &=& g(t, y,\epsilon),\nn\\
y(t_0,\e)&=& \mathring y
\end{eqnarray}
in $D=I_{N}\times I_{T}\times I_{\epsilon_{0}},$ where $I_{N} =]-N,N[$,  $I_T = ]t_0, T[, I_{\epsilon_{0}}= \{\epsilon: 0< \epsilon< \epsilon_{0}<<1\},$ with $T>t_0, N>0$ and $g \in C^{2}(\bar{D},\mathbb{R}).$ Further, define
\begin{equation}
G(t,\epsilon)= \int _{t_{0}}^{t} g_{y}(s,0,\epsilon)ds.
\label{G}
\end{equation}
Then we adopt the following assumptions.
\begin{enumerate}
\item[$(\alpha_{1})$]  $g(t,y,0)=0$ has two roots  $y\equiv 0$ and $y=\phi(t)\in C^{2}(\bar I_{T})$ in ${I}_{N}\times \bar{I}_{T}$, which  intersect at $t=t_{c}\in (t_{0},T)$ and
$$
\phi(t)<0 \mbox{ for } t_{0}\leq t \leq t_{c},\quad \phi(t)>0 \mbox{ for } t_{c}\leq t \leq T.
$$

\item[$(\alpha_{2})$]
\begin{align*}
& g_{y}(t,0,0)<0, \; g_{y}(t,\phi(t),0)>0 \mbox{ for } t \in [t_{0},t_{c}),\\
&g_{y}(t,0,0)>0,\;  g_{y}(t,\phi(t),0)<0 \mbox{ for } t \in (t_{c}, T].
\end{align*}
\item[($\alpha_{3}$)]  $g(t,0,\epsilon)\equiv 0$ for $(t,\epsilon)\in \bar{I}_{T}\times \bar{I}_{\epsilon_{0}}.$
\item[($\alpha_{4}$)] The equation $G(t,0)=0$ has a root $t^{*} \in ]t_{0}, T[.$
\item[($\alpha_{5}$)] There is a positive number $c_{0}$ such that $\pm c_{0} \in I_{N}$ and
\begin{align*}
g(t,y,\epsilon)\leq g_{y}(t,0,\epsilon)y \mbox{ for } t \in [t_{0},t^{*}], \; \epsilon \in \bar{I}_{\epsilon_{0}}, |y|\leq c_{0}.
\end{align*}
\end{enumerate}

\begin{theorem}\label{thbtz}
Let us assume that all the assumptions ($\alpha_{1}$)--($\alpha_{5}$) hold. If $y_{0} \in (0,a),$ then for sufficiently small $\epsilon$ there exists a unique solution $y(t,\epsilon)$ of (\ref{eq1}) with
\begin{align}
\lim _{\epsilon \to 0} y(t,\e)=0 \mbox{ for } t \in ]t_{0},t^{*}[, \\
\lim _{\epsilon \to 0} y(t,\e)=\phi(t) \mbox{ for } t \in ]t^{*}, T],\label{btz2}
\end{align}
and the convergence is almost uniform on the respective intervals.
\end{theorem}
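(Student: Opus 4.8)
\emph{Proof proposal.} The plan is to argue entirely with upper and lower solutions for the scalar equation (\ref{eq1}), in the spirit of \cite{but2004}. Two preliminary remarks set the stage. First, by ($\alpha_3$) the constant $y\equiv0$ solves (\ref{eq1}), so uniqueness forces $y(t,\e)>0$ wherever the solution exists, and hence only one-sided bounds will be needed. Second, and crucially, $G(\cdot,0)<0$ on $(t_0,t^*)$ and $G(\cdot,0)>0$ on $(t^*,T]$ with $t^*\in(t_c,T)$: indeed $\partial_tG(t,0)=g_y(t,0,0)$ is negative on $[t_0,t_c)$ and positive on $(t_c,T]$ by ($\alpha_1$)--($\alpha_2$), so $G(\cdot,0)$ falls from $G(t_0,0)=0$ to a minimum at $t_c$ and then rises, and by ($\alpha_4$) it can vanish again only once, necessarily at a point $t^*>t_c$. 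I will also use two consequences of $g\in C^2(\bar D)$: the uniform bound $|G(t,\e)-G(t,0)|\le K\e$ on $\bar I_T$, and, via Taylor's theorem together with ($\alpha_3$), the inequality $g_y(t,0,\e)\,y-Cy^2\le g(t,y,\e)$ valid on all of $\bar D$ for every $C\ge\tfrac12\|g_{yy}\|_{L^\infty(\bar D)}$.

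\emph{Step 1: decay to $0$ on $(t_0,t^*)$.} Put $z(t)=y_0\,e^{G(t,\e)/\e}$, the solution of $\e z'=g_y(t,0,\e)z$ with $z(t_0)=y_0$. On $[t_0,t^*]$ one has $G(t,\e)\le K\e$, hence $z\le y_0e^{K}\le c_0$ provided $y_0$ is small (this is where the hypothesis $y_0\in(0,a)$ enters, fixing $a$ accordingly); then ($\alpha_5$) gives $\e z'=g_y(t,0,\e)z\ge g(t,z,\e)$, so $z$ is an upper solution and $0<y(t,\e)\le y_0e^{G(t,\e)/\e}$ on $[t_0,t^*]$. On any compact $[t_0+\delta,t^*-\delta]$ we have $G(\cdot,0)\le-\kappa<0$, so $G(\cdot,\e)\le-\kappa/2$ for small $\e$ and $y(t,\e)\le y_0e^{-\kappa/(2\e)}\to0$ uniformly; this proves the first limit in (\ref{btz2}).

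\emph{Step 2: the take-off past $t^*$ --- the main obstacle.} Here ($\alpha_5$) no longer helps and a lower barrier must be built from the Taylor inequality. Let $\ul y$ solve the Bernoulli problem $\e\ul y'=g_y(t,0,\e)\ul y-C\ul y^2$, $\ul y(t_0)=y_0$; the substitution $w=1/\ul y$ linearises it and produces
\begin{equation}
\ul y(t)=\frac{y_0\,e^{G(t,\e)/\e}}{1+\dfrac{Cy_0}{\e}\displaystyle\int_{t_0}^{t}e^{G(s,\e)/\e}\,ds}.
\label{lowbar}
\end{equation}
Taking $C$ suitably large (and $a$ suitably small) one gets $0\le\ul y\le\eta:=C^{-1}\sup_{\bar D}g_y(\cdot,0,\cdot)<N$ on $[t_0,T]$ by a one-touch comparison: were $\ul y$ equal to $\eta$ first at some $t_1$, then $\e\ul y'(t_1)=\eta\big(g_y(t_1,0,\e)-C\eta\big)\le0$, contradicting that $\ul y$ reaches $\eta$ from below. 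Thus $\ul y$ is a genuine lower solution and $y(t,\e)\ge\ul y(t)$. Everything now rests on Laplace-type estimates of $J(t):=\tfrac1\e\int_{t_0}^{t}e^{G(s,\e)/\e}\,ds$: since $G(\cdot,\e)$ has an interior minimum near $t_c$ and, near $t_0$ and throughout $(t^*,T]$, is essentially affine with nonzero slope (and $G(t^*,\e)=O(\e)$), one has $J(t)=O(1)$ for $t\le t^*$ while $c_1^{-1}e^{G(t,\e)/\e}\le J(t)\le c_2^{-1}e^{G(t,\e)/\e}$ for $t\ge t^*+\delta$. Substituting into (\ref{lowbar}) yields, for $t\ge t^*+\delta$ and $\e$ small, $\ul y(t)\ge\rho=\rho(\delta)>0$ independent of $\e$ --- i.e. the solution, exponentially small just before $t^*$, has escaped every fixed neighbourhood of $0$ by time $t^*+\delta$. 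Obtaining this sharp two-sided control of $J$ while keeping $\ul y$ inside the range where the Taylor bound holds is the step I expect to be delicate.

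\emph{Step 3: capture by $\phi$.} By ($\alpha_1$) the only roots of $g(t,\cdot,0)=0$ in $I_N$ are $0$ and $\phi(t)$, and ($\alpha_2$) gives $g_y(t,\phi(t),0)<0$ for $t>t_c$; hence a constant just above $\max_{[t^*,T]}\phi$ is an upper solution on $[t^*,T]$, so $y$ stays in a fixed compact subset of $\,]0,N[\,$ there, and for every $t>t^*$ the branch $\phi$ is a uniformly asymptotically stable equilibrium of the fast equation whose basin of attraction contains that compact set. Applying the Tikhonov Theorem~\ref{tikhonovth'} on $[t^*+\delta,T]$ --- in its standard form allowing $\e$-dependent initial data lying in a fixed compact subset of the basin, or equivalently by sandwiching $y$ between the solutions started from $\rho(\delta)$ and from that constant --- gives $y(t,\e)\to\phi(t)$ uniformly on $[t^*+\delta,T]$. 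Letting $\delta\downarrow0$ yields the second limit in (\ref{btz2}), almost uniformly on $(t^*,T]$. Existence of the solution on all of $[t_0,T]$ and its uniqueness for small $\e$ follow from the barrier bounds and the local Lipschitz continuity of $g$.
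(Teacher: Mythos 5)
Your argument is correct and shares the paper's overall architecture --- the upper solution $\mathring y\, e^{G(t,\e)/\e}$ coming from the linearization via $(\alpha_5)$, a lower barrier that detaches from zero at $t^*$, and a basin-of-attraction/sandwich argument to capture the solution by $\phi$ after $t^*$ --- but your lower barrier is genuinely different from the one of \cite{but2004} that the paper reproduces. The paper takes $\ul Y(t,\e)=\eta e^{(G(t,\e)-\delta(t-t_0))/\e}$, verifies the differential inequality from the same Taylor bound $g_y(t,0,\e)y-g(t,y,\e)\le ky^2$ provided $\eta\le\delta/k$, and locates the detachment time as the root $t(\delta,\e)\to t^*$ of $G(t,\e)-\delta(t-t_0)=0$ via the implicit function theorem; the only analytic input is the value of $G$ at a single point, at the price of the bookkeeping of the parameters $\delta$, $\eta$, $\e(\delta)$. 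You instead solve the Bernoulli minorant exactly, trading that bookkeeping for the Laplace estimate of $J(t)=\e^{-1}\int_{t_0}^{t}e^{G(s,\e)/\e}\,ds$, which you rightly single out as the delicate step; it does go through: splitting at $t^*$, on $[t_0,t^*]$ one has $G(s,\e)\le K\e$ while $G_t=g_y(s,0,\e)$ is bounded away from $0$ except near $t_c$, where $G$ is already very negative, so $J(t^*)=O(1)$; and on $[t^*,t]$ one has $G(t,\e)-G(s,\e)\ge m(t-s)$ with $m=\min_{[t^*,T]}g_y(\cdot,0,0)>0$ (using $t^*>t_c$), whence $e^{-G(t,\e)/\e}J(t)\le 1/m+o(1)$ and $\ul y(t)\ge m/(2C)$ for $t\ge t^*+\delta$ and small $\e$ --- in fact a positive lower bound independent of $\delta$ and of $y_0$, which is a small bonus of your route. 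Two minor points of comparison: the paper removes the smallness restriction on $\mathring y$ by an initial-layer argument (Tikhonov gives $0<y(t_0+\delta,\e)\le c_0$, so one may assume $\mathring y\le c_0$ from the start), whereas you shrink $a$, which slightly weakens the statement; and your constant upper solution just above $\max_{[t^*,T]}\phi$ plays the role of the paper's invariant strip $\{|y-\phi|<\omega\}$ in supplying the upper half of the final sandwich.
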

Some ideas of the proof of the above theorem play a key role in the considerations of this paper and thus we give a sketch of it in  \ref{secB}. Here we introduce essential notation and definitions which are necessary to formulate and prove the main results.

 By, respectively, lower and upper solutions to (\ref{eq1}) we understand continuous and piecewise differentiable (with respect to $t$) functions $\ul Y$ and $\ol Y$ that satisfy, for $t\in \bar I_T,$
\begin{eqnarray}
\ul Y(t,\e)\leq \ol Y(t,\e),&\quad&  \ul Y(t_0,\e)\leq \mathring y\leq  \ol Y(t_0,\e),\\
\epsilon \frac{d\ol Y}{dt} - g(t, \ol Y,\epsilon)\geq 0, &\quad& \epsilon \frac{d\ul Y}{dt} - g(t, \ul Y,\epsilon)\leq 0.
\end{eqnarray}
It follows that if there are upper $\ol Y$ and lower $\ul Y$ solutions to (\ref{eq1}), then there is a unique solution $y$  to (\ref{eq1}) satisfying
\begin{equation}
\ul Y(t,\e)\leq y(t,\e)\leq \ol Y(t,\e), \quad t\in \bar I_T, \e \in I_{\e_0}.
\label{exist}
\end{equation}
The proof of Theorem \ref{thbtz} uses an upper solution given by
\begin{equation}
\ov Y(t,\e) = \mathring u e^{\frac{G(t,\e)}{\e}}.
\label{upper}
\end{equation}
If we consider $\mathring y>0$ then, by assumption $(\alpha_3)$, $\ul Y=0$ is an obvious lower solution to (\ref{eq1}). It is, however, too crude to analyze the behaviour of the solution close to $t^*$ and the modification of (\ref{upper}), given by
\begin{equation}
\ul Y(t,\e) = \eta e^{\frac{G(t,\e)-\delta(t-t_0)}{\e}},
\label{ulY}
\end{equation}
is used, where $\eta, \delta$ are appropriately chosen constants.

 As explained in detail in \ref{secB}, conditions on $g$ can be substantially relaxed. Namely, we may assume that $g$ is a Lipschitz function on $\bar D$ with respect to all variables such that $g$ is twice continuously differentiable with respect to $y$ uniformly in $(t,y,\e)\in \bar D$ and that there is a neighbourhood of $(t^*,0)$, $V_{(t^*,0)}:= \;]t^*-\alpha, t^*+\alpha[\;\times\; ]-\e_1,\e_1[ $ in which $g_u(t,0,\e)$ is differentiable with respect to $\e$ uniformly in $t$.

\subsection{The case of reversed stabilities of quasi steady states}
It is interesting to observe that the phenomenon of delayed exchange of stability, described in Theorem \ref{thbtz}, does not occur if the role of the quasi steady states is reversed. Precisely, we have
\begin{theorem}\label{thbtz'}
Let us consider problem (\ref{eq1}) and assume
\item[$(\alpha_{1}')$]  $g(t,y,0)=0$ has two roots  $y\equiv 0$ and $y=\phi(t)\in C^{2}(\bar I_{T})$ in ${I}_{N}\times \bar{I}_{T}$, which  intersect at $t=t_{c}\in (t_{0},T)$ and
$$
\phi(t)>0 \mbox{ for } t_{0}\leq t \leq t_{c},\quad \phi(t)<0 \mbox{ for } t_{c}\leq t \leq T.
$$
Further, we assume that $(\alpha_2)$ and $(\alpha_3)$ are satisfied. Let $y_0\in (0,a)$. Then
\begin{eqnarray}
\lim _{\epsilon \to 0} y(t,\e)&=&\phi(t) \mbox{ for } t \in ]t_{0},t_c[, \nn\\
\lim _{\epsilon \to 0} y(t,\e)&=&0 \mbox{ for } t \in [t_c, T].\label{btz2'}
\end{eqnarray}
\end{theorem}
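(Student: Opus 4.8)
The plan is to show that in the ``reversed'' configuration there is essentially no subtle dynamics: the solution stays on the asymptotically stable branch of quasi--steady states throughout $[t_0,T]$, and by $(\alpha_1')$ this branch is $y=\phi(t)$ (which is positive) on $[t_0,t_c)$ and $y\equiv 0$ on $(t_c,T]$, the two pieces meeting at the transcritical point $(t_c,0)$; hence the stability switch occurs \emph{exactly} at $t_c$. The content of the theorem is precisely this absence of delay, and the reason it disappears (compared with Theorem~\ref{thbtz}) is that here the branch attracting just before $t_c$ is the \emph{non-invariant} curve $\phi$: by a Tikhonov--type mechanism the solution then only stays within $O(\e)$ of it, instead of becoming exponentially close, as it does to the invariant line $y\equiv 0$ in Theorem~\ref{thbtz}, so once $\phi$ turns repelling for $t>t_c$ no macroscopic time is needed to leave its continuation. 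The argument is carried out with upper and lower solutions, as in \ref{secB}. First, by $(\alpha_3)$ the line $y\equiv 0$ solves (\ref{eq1}), so uniqueness forces $y(t,\e)>0$ on the whole interval of existence.

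\emph{Behaviour on $]t_0,t_c[$.} Fix $\delta>0$. On $[t_0,t_c-\delta]$ the root $y=\phi(t)$ is isolated from $y\equiv 0$ and, under the present stability hypotheses, uniformly asymptotically stable there ($g_y(t,\phi(t),0)$ is bounded away from $0$), while the whole interval $(0,N)\supset(0,a)\ni\mathring y$ lies in its basin of attraction; Theorem~\ref{tikhonovth'} (in the degenerate case with no slow variable), equivalently the barriers of \ref{secB} built around the branch $\phi$, then gives $y(t,\e)\to\phi(t)$ uniformly on $[t_0+\delta,t_c-\delta]$. Since $\delta>0$ is arbitrary, $\lim_{\e\to 0}y(t,\e)=\phi(t)$ for every $t\in\,]t_0,t_c[$, almost uniformly; in particular $\limsup_{\e\to 0}y(t_c-\delta,\e)\le\phi(t_c-\delta)$ for each $\delta$.

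\emph{Passage through $t_c$ and behaviour on $[t_c,T]$.} By the hypotheses the phase line of $\tilde y'=g(t,\tilde y,0)$ has, for $t<t_c$, the stable point $\phi(t)>0$ and the unstable point $0$, so $g(t,y,0)<0$ whenever $\phi(t)<y<N$; for $t\ge t_c$ the stabilities are interchanged and, as now $\phi(t)<0$, one has $g(t,y,0)<0$ for \emph{all} $0<y<N$. Put $\Phi_\delta:=\max_{[t_c-\delta,t_c]}\phi$, so $\Phi_\delta\to 0$ as $\delta\to 0$. For any $\sigma>0$ with $\Phi_\delta+\sigma<N$, the constant $\ol Y\equiv\Phi_\delta+\sigma$ is an upper solution of (\ref{eq1}) on $[t_c-\delta,t_c]$ (there $\ol Y'=0$ and $g(t,\ol Y,\e)<0$ for $\e$ small, since $\ol Y>\Phi_\delta\ge\phi(t)$), and by the previous step $\ol Y(t_c-\delta)\ge y(t_c-\delta,\e)$ for $\e$ small; hence $0<y(t_c,\e)\le\Phi_\delta+\sigma$. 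Letting $\e\to 0$, then $\delta\to 0$, then $\sigma\to 0$ yields $y(t_c,\e)\to 0$. In the same way, for any small $\sigma>0$ the constant $\ol Y\equiv\sigma$ is an upper solution on $[t_c,T]$ (because $g(t,y,0)<0$ for $0<y<N$ there) dominating $y(t_c,\e)$ for $\e$ small, so $0<y(t,\e)\le\sigma$ on $[t_c,T]$, and since $\sigma$ is arbitrary, $y(t,\e)\to 0$ uniformly on $[t_c,T]$. These barriers also keep the solution inside $I_N$, which together with $y>0$ secures existence and uniqueness on $[t_0,T]$ for small $\e$.

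\emph{Main difficulty.} The delicate spot is the junction $t_c$, where both branches lose hyperbolicity ($g_y(t_c,0,0)=0$) and the exponential barriers used on $]t_0,t_c[$ degenerate, so the estimate of the first step cannot be pushed up to $t=t_c$ directly. The crude constant--barrier argument above bypasses a genuine turning--point analysis by exploiting that for $t\ge t_c$ every positive value is already drained into $y\equiv 0$ --- exactly the structural feature (``reversed stabilities'', $\phi<0$ past $t_c$) responsible for the absence of delay and distinguishing this case from Theorem~\ref{thbtz}. If one wanted the sharper statement that $y(t,\e)=\phi(t)+O(\e)$ up to $t=t_c-\omega(\e)$ for a suitable $\omega(\e)\to 0$, one would have to reproduce near $t_c$ the estimates made near $t^*$ in \ref{secB}, which we expect to go through with only notational changes.
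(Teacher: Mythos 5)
Your proposal is correct and follows essentially the same route as the paper: the Tikhonov mechanism (equivalently, barriers around the attracting branch $\phi$) on compact subintervals of $]t_0,t_c[$, followed by the observation that $g<0$ above the composite attracting quasi steady state together with the invariance of $y\equiv 0$ forces the solution down immediately after $t_c$. Your constant upper solutions on $[t_c-\delta,t_c]$ and $[t_c,T]$ are simply an explicit implementation of the paper's terser final step (``$g(t,y,\e)<0$ for $y>\tilde\phi+\omega_\e$'' and ``argue as in the Appendix''), and they correctly handle the endpoint $t=t_c$, which the theorem's conclusion includes.
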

\begin{proof}
We see that $y=\phi(t)$ is an isolated attracting quasi steady state in the domain $[0,\bar t]\times [a_0, a],$ where $\bar t<t_c$
is an arbitrary number close to $t_c$ and $a_0>0$ is an arbitrary number that satisfies $a_0 < \inf_{t\in [0, \bar t]}\phi(t)$. Then $y_0>0$ is in the domain of attraction of $y=\phi(t)$. Hence, the first equation of (\ref{btz2'}) is satisfied. Let us take any $t'>t_c$. Then $y(t',\e)>0$ and thus it is in the domain of attraction of the quasi-steady state $y=0$. We cannot use directly the version of Tikhonov theorem, \cite[Theorem 1B]{LS1}, as we do not know a priori whether $y(t',\e)$ converges. In the one dimensional case, however, we can argue as in \ref{secB} to see that the second equation of (\ref{btz2'}) is satisfied on $]t_c, T]$. Finally, denoting by $\tilde \phi$ the composite attracting quasi steady state, $\tilde\phi(t) = \phi(t)$ for $t_0\leq t<t_c$ and $\tilde\phi(t) = 0$ for $t_c\leq t\leq T$, we see that $g(t,y,0)<0$ for $y>\ti \phi$ and thus, for $y>0$,   $g(t,y,\e)<0$ for $y >  \phi +\omega_\e$ with $\omega_\e \to 0$ as $\e \to 0$.
\end{proof}
\begin{remark} \emph{It is interesting to note that in this case the root $t^*$ of $G(t,0)$, see (\ref{G}), can satisfy $t^*>t_c$, but this does not have any impact on the switch of stabilities. Also, in general, the assumptions of theorem on an immediate switch of stabilities, e.g. \cite[Theorem 1.1]{but2004} are not satisfied,  see \cite[pp. 111--114]{EKP}.} \end{remark}
\section{Two-dimensional case}
\label{sec2}
\setcounter{equation}{0}
We consider the following singularly perturbed system of equations
\begin{eqnarray}
 x'(t) &=& f(t,x,y,\e), \nn\\
\e y'(t) &=& g(t,x,y,\e)\nn\\
x(t_0) &=& \mathring x, \quad y(t_0)= \mathring y.\label{sys1}
\end{eqnarray}
Let $V := I_T\times I_M\times I_N \times I_{\e_0} =]t_0,T[\;\times\; ]-M,M[\;\times\; ]-N,N[\;\times \;]0,\e_0[.$
 We introduce the following general assumptions concerning the structure of the system. Note that, apart the monotonicity assumptions (a3) and (a4), they are natural extensions of the assumptions of Theorem \ref{thbtz} to two dimensions.
 \begin{figure}[htp]
\begin{center}
  % Requires \usepackage{graphicx}
  % replace aims_logo.eps by your figure file name
  \includegraphics[scale = 0.35]{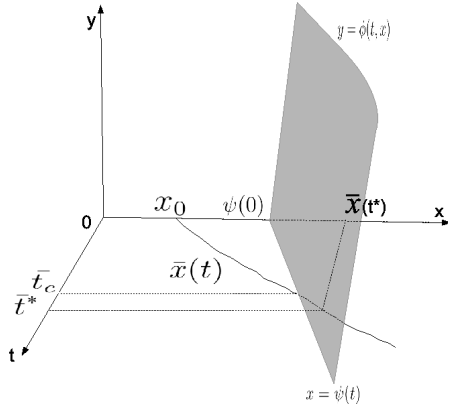}\\
  \caption{Illustration of the assumptions for Theorem \ref{th21}. }\label{ass1}
  \end{center}
\end{figure}

 \begin{description}
 \item (a1) Functions $f,g$ are $C^2(\ol V)$ for some $t_0<T \leq \infty, 0<M,N\leq \infty, \e_0>0$.
 \item (a2) $g(t,x,0,\e)=0$ for $(t,x,\e)\in I_T\times I_M\times I_{\e_0}.$
 \item (a3) $f(t,x,y_1,\e)\leq f(t,x,y_2,\e)$ for any $(t,x,y_1,\e), (t,x,y_2,\e) \in V, y_1\geq y_2$.
 \item (a4) $g(t,x_1,y,\e)\leq g(t,x_2,y,\e)$ for any $(t,x_1,y,\e), (t,x_2,y,\e) \in V, x_1\leq x_2$.
 \end{description}
 Further, we need assumptions related to the structure of quasi steady states of (\ref{sys1}).
 \begin{description}
 \item (a5) The set of solutions of the equation
 \begin{equation}
 0 = g(t, x,y,0)
 \label{degeq}
 \end{equation}
 in $\bar I_T\times \bar I_N\times \bar I_M$ consists of $y =0$ (see assumption (a2)) and $y = \phi(t,x),$ with $\phi \in C^2( \bar I_T\times\bar  I_M)$. The equation
 \begin{equation}
 0 = \phi(t,x)
 \label{switch}
 \end{equation}
 for each $t \in  \bar I_T$ has a unique simple solution $]0, M[ \ni x = \psi(t)\in C^2( \bar I_T)$. To fix attention, we assume that $\phi(t,x)<0$ for $x-\psi(t)<0$ and $\phi(t,x)>0$ for $x-\psi(t)>0$.
 \item (a6)
 $$
 \begin{array}{cccc}
 g_y(t,x,0,0)<0&\mathrm{and}& g_y(t,x,\phi(t,x),0)>0&\mathrm{for}\;x-\psi(t)<0,\\
 g_y(t,x,0,0)>0& \mathrm{and}&g_y(t,x,\phi(t,x),0)<0&\mathrm{for}\;x-\psi(t)>0.\end{array}$$
\end{description}
Since we are concerned with the behaviour of solutions close to the intersection of quasi steady state, we must assume that they actually pass close to it. Denote by $\bar x(t,\e)$ the solution of \begin{equation}
 x' = f(t, x, 0,\e), \quad  x(t_0,\e) = \mathring x.
\label{deg11}
\end{equation}
 Then we assume that
 \begin{description}
\item (a7) the solution $\bar x= \bar x(t)$ to the problem (\ref{deg11}) with $\e=0$, called the  reduced problem,
\begin{equation}
 x' = f(t,x,0,0), \quad x(t_0) =\, \mathring x
 \label{a7}
 \end{equation}
 with $-M<\,\mathring x\, <\psi(t_0)$ satisfies $\bar x(T)> \psi(T)$ and there is exactly one $\bar t_c \in ]t_0, T[$ such that $\bar x(\bar t_c) = \psi(\bar t_c)$.
\end{description}
Further, we define
\begin{equation}
\bar G(t,\epsilon)= \int _{t_{0}}^{t} g_{y}(s,\bar x(s,\e), 0,\epsilon)ds
\label{barG}
\end{equation}
and assume that
\begin{description}
\item (a8) the equation $$
\bar G(t,0)=\int _{t_{0}}^{t} g_{y}(s,\bar x(s), 0,0)ds=0$$ has a root $\bar t^{*} \in ]t_{0}, T[.$
\end{description}
As in the one dimensional case, by assumption (a6), $\bar G$ attains a unique negative minimum at $\bar t_c$ and is strictly increasing for $t>\bar t_c$ and thus assumption (a8) ensures that $\bar t^*$ is the only positive root in $]0,T[$.

Finally,
\begin{description}
\item (a9) There is  $0<c_{0}\in I_{N}$ and
\begin{align*}
g(t, \bar x(t,\e), y,\epsilon)\leq g_{y}(t, \bar x(t,\e), 0,\epsilon)y \mbox{ for } t \in [t_{0},\bar t^{*}], \; \epsilon \in \bar{I}_{\epsilon_{0}}, |y|\leq c_{0}.
\end{align*}
\end{description}
We noted earlier, though the list of assumptions is quite long, they are quite natural. Apart from usual regularity assumptions, assumptions (a5) and (a6) ensure that we have two quasi steady states with interchange of stabilities. Crucial for the proof are assumptions (a3) and (a4) that  allow to control solutions of (\ref{sys1}) by upper and lower solutions of appropriately constructed one dimensional problems, while (a7)-(a9) make sure that the latter satisfy the assumptions of Theorem \ref{thbtz}.

\begin{remark}\label{rem23}
\emph{In what follows repeatedly we will use the following argument which uses monotonicity of $f$ and $g$ in (\ref{sys1}) and is based on e.g.  \cite[Appendix C]{Walt}. Consider a system of differential equations
\begin{eqnarray}
x'=F(t,x,y),&\quad& x(t_0) =\mathring x,\nn\\
y'=G(t,x,y),&\quad& y(t_0) =\mathring y,
\label{sys6}
\end{eqnarray}
with $F$ and $G$ satisfying Lipschitz conditions with respect to $x,y$ in some domain of $\mbb R^2,$ uniformly in $t\in [t_0,T]$. Assume that $F$ satisfies $F(t,x,y_1)\leq F(t,x,y_2)$ for $y_1\geq y_2.$ If we know that a unique solution $(x(t),y(t))$ of (\ref{sys6}) satisfies $\phi_1(t, x(t))\leq y(t) \leq \phi_2(t,x(t))$ on $[t_0,T]$ for some Lipschitz functions $\phi_1$ and $\phi_2$, then $z_2(t) \leq x(t)\leq z_1(t),$ where $z_i$ satisfies
\begin{equation}
z_i' = F(t,z_i,\phi_i(t,z_i)), \qquad z_i(t_0) = \mathring x,
\label{zi}
\end{equation}
$i=1,2$.
Indeed, consider $z_1$ satisfying $z_1'(t)\equiv F(t,z_1(t),\phi(t,z_1(t))),\;\;z_1(t_0) = \mathring x.$ Then we have  $x'(t) \equiv F(t, x(t), y(t)) \leq F(t,x(t),\phi_1(t,x(t))$ and we can invoke \cite[Theorem B.1]{Walt} to claim that $x(t)\leq z_1(t)$ on $[t_0,T]$ (note that in the one dimensional case the so-called type $K$ assumption that is  to be satisfied by $F$ is always fulfilled). The other case follows similarly from the same result.\medskip\\
We also note that if $F$ satisfies $F(t,x,y_1)\leq F(t,x,y_2)$ for $y_1\leq y_2$ and we know that a unique solution $(x(t),y(t))$ of (\ref{sys6}) satisfies $\phi_1(t, x(t))\leq y(t)\leq \phi_2(t,x(t))$ on $[t_0,T]$ for some Lipschitz functions $\phi_1$ and $\phi_2$, then $z_1(t) \leq x(t)\leq z_2(t)$ where, as before, $z_i$ is a solution to (\ref{zi}).}
\end{remark}
\begin{theorem}\label{th21}Let assumptions (a1)-(a9) be satisfied and $-M<\init x <\psi(t_0), 0<\mathring y<N$.  Then  the solution $(x(t,\e),y(t,\e))$ of (\ref{sys1}) satisfies
\begin{eqnarray}
\label{eq4.82a}
  \lim_{\epsilon\rightarrow0}x(t,\e)&=&\bar{x}(t) \quad \mathrm{on}\;[t_0,\bar t^*[,\\
  \lim_{\epsilon\rightarrow0}{y}(t,\e)&=&0 \quad \mathrm{on}\;]t_0,\bar t^*[,\label{eq4.82}
  \end{eqnarray}
where $\bar{x}(t)$ satisfies (\ref{a7}) with $\bar{x}(t_0) = \mathring x$
and the convergence is almost uniform on respective intervals. Furthermore, $]t_0,\bar t^*[$ is the largest interval on which the convergence in (\ref{eq4.82}) is almost uniform.
\end{theorem}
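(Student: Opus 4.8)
The plan is to trap the pair $(x,y)$ between solutions of scalar singularly perturbed problems to which Theorem~\ref{thbtz} applies, the monotonicity hypotheses (a3), (a4) being precisely what makes this possible, in the spirit of Remark~\ref{rem23}. I start with three elementary facts. Since $g(t,x,0,\e)=0$ by (a2), the constant $\zeta\equiv0$ solves $\e\zeta'=g(t,x(t,\e),\zeta,\e)$, so by uniqueness $y(t,\e)>0$ on $[t_0,T]$. Since $f$ is non-increasing in $y$ by (a3), $f(t,x,y,\e)\le f(t,x,0,\e)$ along the trajectory, whence $x(t,\e)\le\bar x(t,\e)$ by comparison with \eqref{deg11}, where $\bar x(\cdot,\e)\to\bar x(\cdot)$ uniformly on $[t_0,T]$ by regular dependence on $\e$. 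Finally, the scalar ``frozen'' equation
\[
\e\zeta'=g(t,\bar x(t,\e),\zeta,\e),\qquad \zeta(t_0)=\mathring y,
\]
satisfies the hypotheses $(\alpha_1)$--$(\alpha_5)$ of Theorem~\ref{thbtz} with $G\equiv\bar G$, $t_c\equiv\bar t_c$, $t^*\equiv\bar t^*$, $\phi(t)\equiv\phi(t,\bar x(t))$: (a2) gives $(\alpha_3)$; (a5)--(a6), together with $\bar x(t)<\psi(t)$ for $t<\bar t_c$ and $\bar x(t)>\psi(t)$ for $t>\bar t_c$ (which is (a7)), give $(\alpha_1)$--$(\alpha_2)$; (a8) gives $(\alpha_4)$; and $(\alpha_5)$ is exactly (a9). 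So Theorem~\ref{thbtz}, after the routine $O(\e)$-layer reduction near $t_0$ bringing $\mathring y$ into its range of validity, yields a solution $\zeta(t,\e)$ with $\zeta(t,\e)\to0$ almost uniformly on $]t_0,\bar t^*[$.

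Now $x(t,\e)\le\bar x(t,\e)$ and (a4) give $g(t,x(t,\e),\zeta,\e)\le g(t,\bar x(t,\e),\zeta,\e)$ for all $\zeta$, so $y(\cdot,\e)$ is a lower solution of the frozen equation with the same initial value; hence
\[
0< y(t,\e)\le\zeta(t,\e),\qquad t\in[t_0,T],
\]
and letting $\e\to0$ on $]t_0,\bar t^*[$ gives \eqref{eq4.82}, almost uniformly. For $x$, fix $\rho>0$: on $[t_0,\bar t^*-\rho]$ we have $0\le y(t,\e)\le\zeta(t,\e)$, so Remark~\ref{rem23} (with $F=f$ non-increasing in $y$, $\phi_1\equiv0$, $\phi_2=\zeta(\cdot,\e)$) gives $z_2(t,\e)\le x(t,\e)\le z_1(t,\e)$, where $z_1'=f(t,z_1,0,\e)$ --- so $z_1=\bar x(\cdot,\e)$ --- and $z_2'=f(t,z_2,\zeta(t,\e),\e)$, each starting from $\mathring x$. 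As $\zeta(\cdot,\e)$ is uniformly bounded and tends to $0$ pointwise on $]t_0,\bar t^*[$, $\int_{t_0}^{\bar t^*-\rho}\zeta(s,\e)\,ds\to0$, so a Gronwall estimate gives $z_2(t,\e)\to\bar x(t)$ uniformly on $[t_0,\bar t^*-\rho]$; with $z_1(t,\e)=\bar x(t,\e)\to\bar x(t)$ this proves \eqref{eq4.82a}. (Existence of $(x(\cdot,\e),y(\cdot,\e))$ on all of $[t_0,T]$ follows by continuation from these two-sided bounds.)

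It remains to show \eqref{eq4.82} is not almost uniform on any $]t_0,b[$ with $b>\bar t^*$. Suppose it were, and fix $\delta_1>0$ with $[\bar t^*-\delta_1,\bar t^*+\delta_1]\subset\,]t_0,b[$, so $\sup_{[\bar t^*-\delta_1,\bar t^*+\delta_1]}y(\cdot,\e)\to0$. The non-negative $w:=\bar x(\cdot,\e)-x(\cdot,\e)$ obeys $w'\le L\,(w+y)$, $w(t_0)=0$ (by (a1)), hence $w(t,\e)\le C\int_{t_0}^t y(s,\e)\,ds$; combined with the decay $\int_{t_0}^{\bar t^*-\delta_1}y(s,\e)\,ds\to0$ already established, this forces $x(t,\e)\to\bar x(t)$ uniformly on \emph{all} of $[t_0,\bar t^*+\delta_1]$. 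Therefore $g_y(t,\bar x(t,\e),0,\e)-g_y(t,x(t,\e),0,\e)\to0$ uniformly there, and, as in~\ref{secB}, the perturbed lower solution $\ul Y(t,\e)=\eta\,e^{(\bar G(t,\e)-\delta(t-t_0))/\e}$ of \eqref{ulY} is admissible for $\e y'=g(t,x(t,\e),y,\e)$ on $[t_0,\bar t^*+\delta_1]$ as long as $\ul Y\le c_1$, the inequality to be checked reducing to $g_y(t,\bar x(t,\e),0,\e)-g_y(t,x(t,\e),0,\e)\le\delta-O(\ul Y)$, which holds for $\e$ small once $\delta$ and $c_1$ are small. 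For $\delta<\min_{[\bar t^*,\bar t^*+\delta_1]}g_y(t,\bar x(t),0,0)$ (positive by (a6)--(a7)) the shifted phase $\bar G(t,0)-\delta(t-t_0)$ is negative at $\bar t^*$ and strictly increasing afterwards, so it vanishes at a unique $t_\delta^*\in\,]\bar t^*,\bar t^*+\delta_1[$ with $t_\delta^*\to\bar t^*$ as $\delta\to0$; hence $\ul Y\le c_1$ on $[t_0,t_\delta^*]$ and $\ul Y$ then grows exponentially in $1/\e$, reaching $c_1$ at some $\tau(\e)\to t_\delta^*$. Thus $y(\tau(\e),\e)\ge c_1$ with $\tau(\e)\in[\bar t^*-\delta_1,\bar t^*+\delta_1]$ for $\e$ small, contradicting $\sup_{[\bar t^*-\delta_1,\bar t^*+\delta_1]}y(\cdot,\e)\to0$. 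Hence $]t_0,\bar t^*[$ is the largest such interval.

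I expect this last step to be the main obstacle. Two points need care. The scalar barriers must stay anchored at $t_0$ (up to an $O(\e)$ shift): re-anchoring at a fixed later time $t_1$ would replace $\bar t^*$ by the root of $\bar G(\cdot)-\bar G(t_1)$, which lies strictly \emph{before} $\bar t^*$. More seriously, the ``take-off'' of $y$ near $\bar t^*$ that gives optimality requires $x(t,\e)\to\bar x(t)$ \emph{past} $\bar t^*$, which is not contained in \eqref{eq4.82a} and must be recovered by bootstrapping from the negated conclusion through the Gronwall bound on $\bar x(\cdot,\e)-x(\cdot,\e)$; checking that this bootstrap closes, and that $t_\delta^*$ can be forced arbitrarily close to $\bar t^*$, is the technical core.
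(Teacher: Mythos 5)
Your proposal is correct, and its first two thirds coincide with the paper's argument: positivity of $y$, the bound $x(t,\e)\leq\bar x(t,\e)$ from (a3), the sandwich $0<y(t,\e)\leq\zeta(t,\e)$ via (a4) and the frozen scalar equation along $\bar x(\cdot,\e)$, and the verification that this frozen equation satisfies $(\alpha_1)$--$(\alpha_5)$ so that Theorem \ref{thbtz} applies. (For the convergence of $x$ you use a two-sided Gronwall estimate with $\int\zeta\to0$, where the paper invokes the Tikhonov theorem near $t_0$ followed by regular perturbation on $[\hat t,\bar t^*[$; both work.)

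The maximality step is where you genuinely diverge, and your route is sound. The paper proceeds constructively: it builds a Lipschitz lower bound $\ul x(t,\rho,\theta)$ for $x(t,\e)$ (a piecewise function smoothed by comparison with $\ul x_3'=f(t,\ul x_3,\rho,0)$), forms the auxiliary scalar problem $\e\ul y'=g(t,\ul x,y,\e)$ with its own phase $\ul G$, locates its root $\ul t^*(\rho,\theta)\geq\bar t^*$ by the Implicit Function Theorem together with the monotonicity (\ref{niergy}), and lets the one-dimensional lower solution of that auxiliary problem take off. You instead bootstrap from the negated conclusion: the Gronwall bound $\bar x(\cdot,\e)-x(\cdot,\e)\leq C\int_{t_0}^t y\,ds$ plus the assumed uniform smallness of $y$ on $[\bar t^*-\delta_1,\bar t^*+\delta_1]$ forces $x(t,\e)\to\bar x(t)$ uniformly \emph{past} $\bar t^*$, which lets you keep the unperturbed phase $\bar G$ and verify the lower-solution inequality directly for the equation satisfied by $y(t,\e)$ itself, the error term $g_y(t,\bar x(t,\e),0,\e)-g_y(t,x(t,\e),0,\e)\geq0$ being absorbed into $\delta$ precisely because the contradiction hypothesis makes it uniformly small. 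What you buy is the elimination of the entire $\ul x$/$\ul G$/$\ul t^*(\rho,\theta)$ apparatus and of the Implicit Function Theorem; what the paper's version buys is a lower bound for $x$ and a take-off time that exist independently of the contradiction hypothesis (and are reused in the proof of Theorem \ref{th3.2}, where your shortcut would not be available since the convergence of $x$ beyond $\bar t^*$ is exactly what is at stake there). Your two flagged points of care (anchoring the barrier at $t_0$, and forcing $t^*_\delta\to\bar t^*$) are handled correctly; only the bookkeeping between $\eta$ and the threshold $c_1\leq\delta/(2k)$ in the final take-off needs to be stated more carefully, but this is the same bookkeeping as in \ref{secB} and poses no difficulty.
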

\begin{proof}
   First we shall prove that there is $\bar t^*$ such that $y(t,\e) \to 0$ almost uniformly on $]0,\bar t^*[$. Let us fix initial conditions $(\mathring x, \mathring y)$ as in the assumptions and consider the solution $(x(t,\e), y(t,\e))$ originating from this initial condition.  Since $y(t,\e)\geq 0$ on $[t_0,T]$, assumption (a3) gives
\begin{equation}
x(t,\e) \leq \bar x(t,\e),
\label{xbound}
\end{equation}
 see (\ref{deg11}).
Then assumptions (a2) and (a4) give
\begin{equation}
0\leq y(t,\e) \leq \bar y(t,\e),
\label{yineq}
\end{equation}
where $\bar y(t,\e)$ is the solution to
\begin{equation}
\e  y' = \bar g(t, y, \e), \quad \bar y(t_0,\e) =\mathring y,
\label{eq1n}
\end{equation}
and we denoted $
\bar g(t,y,\e) := g(t,\bar x(t,\e), y, \e).$ Since (\ref{deg11}) is a regularly perturbed equation, by e.g. \cite{tivasv}, $(t,\e) \to \bar x(t,\e)$ is also twice differentiable with respect to both variables and thus $\bar g$ retains the regularity of $g$. Furthermore, $\bar g(t,y,0) = g(t,\bar x(t),y,0)$.

By (\ref{degeq}),  the only solutions to $\bar g(t,y,0) = 0$ are $y=0$ and $y= \phi(t, \bar x(t)).$ Denote $\varphi(t) = \phi(t, \bar x(t))$. From (\ref{switch}), $\phi(t,x) =0$ if and only if $x=\psi(t)$ and thus $\varphi(t)=0$ if and only if $\bar x(t)=\psi(t)$; that is, by (a7), for $t=\bar t_c$. Indeed, we have   $\varphi(\bar t_c)=\phi(\bar t_c, \bar x(\bar t_c)) = \phi(\bar t_c, \psi(\bar t_c)) =0,$ with $\varphi(t)<0$ for $t<\bar t_c$ and $\varphi(t)>0$ for $t>\bar t_c$. Hence, assumption $(\alpha_1)$ is satisfied for (\ref{eq1n}). Further, since $\bar g_y(t,y,\e) = g_y(t, \bar x(t,\e), y, \e)$, we see that assumption (a6) implies  $(\alpha_2)$. Then assumptions (a8) and (a9) show that assumptions $(\alpha_4)$ and $(\alpha_5)$ are satisfied for (\ref{eq1n}) and thus $\bar y(t,\e)$ satisfies (\ref{btz2}); in particular
$$
\lim _{\epsilon \to 0} \bar y(t,\e)=0 \mbox{ for } t \in ]t_{0},\bar t^{*}[. $$
This result, combined with (\ref{yineq}), shows that
$$
 \lim _{\epsilon \to 0}  y(t,\e)=0 \mbox{ for } t \in ]t_{0},\bar t^{*}[.
 $$
 Now,  for any $\mathring x$ satisfying (a7), there is a neighbourhood $U\ni \mathring x$ and  $\hat t  > t_0$ such that $y=0$ is an isolated quasi steady state  on $[t_0, \hat t]\times \bar U$ so that (\ref{sys1}) satisfies the assumptions of the Tikhonov theorem, see \cite{BaLa14}. Thus,
 $
 \lim_{\e\to 0} x(t,\e) = \bar x(t)
 $
 on $[t_0,\hat t]$ and hence the problem
 $$
 x' = f(t, x, y(t,\e),\e),
$$
 with initial condition $x(\hat t,\e)$ is regularly perturbed on $[\hat t, \bar t^*[$. Therefore, $\lim_{\e\to 0}x(t,\e) = \bar x(t)$ on $[\hat t, \bar t^*[$. Combining the above observations, we have
 $$
  \lim\limits_{\e\to 0} x(t,\e) = \bar x(t)
  $$
 almost uniformly on $[t_0,\bar t^*[$.

 In the next step we shall show that this is the largest interval on which $y(t,\e)$ converges to zero almost uniformly. Assume to the contrary that $\lim_{\e\to 0}y(t,\e) = 0$ almost uniformly on $]t_0, t_1]$ for some $t_1> \bar t^*$; that is,
 for any $\rho>0$ and any $\theta>0$ there is $\e_1 = \e_1(\rho,\theta)$ such that for any $t\in [t_0+\theta, t_1]$ and $\e<\e_1$ we have
 \begin{equation}
 0\leq y(t,\e) \leq \rho.
 \label{deltaest}
 \end{equation}
Then, by assumption (a3), on $[t_0+\theta, t_1]$ we have
$$
f(t, x, \rho, \e) \leq f(t, x, y(t,\e),\e).
$$
At the same time, $y(t,\e)\leq C$ for some constant $C>0$, see e.g. \cite[Proposition 3.4.1]{BaLa14}. In fact, in our case we see that $g<0$ for $y>0,$ sufficiently small $\e$  and $t$ close to $t_0$, hence $y(t,\e)\leq \mathring y$ on $[t_0,t_0+\theta]$ if $\theta$ is sufficiently small. Then the function \begin{equation}
\ul x^1(t) = \left\{\begin{array}{lcl} x_1(t,\e)&\mathrm{for}& t\in [t_0,t_0+\theta[,\\
 x_2(t,\e)&\mathrm{for}& t\in [t_0+\theta,t_1],
 \end{array}
 \right.
 \label{lowsol1}
 \end{equation}
 where $x_1' = f(t,x_1, C,\e), \; x_1(t_0) = \mathring x$ and $x_2' = f(t,x_2, \rho,\e), \; x_2(t_0+\theta) = x_1(t_0+\theta,\e)$ satisfies
 $\ul x^1 (t,\e)\leq x(t,\e)$. However, this function is not differentiable and cannot be used to construct the lower solution  for $y(t,\e)$.
Hence, we consider the solution $ \ul x_3$ to $\ul x_3{'} = f(t,\ul x_3, \rho,0), \; \ul x_3(t_0) = \mathring x$ on $[t_0,t_1]$. By Gronwall's lemma, using the regularity of $f$ with respect to all variables, we get
  \begin{equation}
 |\ul x^1(t,\e)-\ul x_3(t)| \leq L\theta
 \label{ulest}
 \end{equation}
 for some constant $L$ (note that $L$ can be made independent of $\e$ as $f$ is $C^2$ in all variables).  Thus, summarizing, for a given $\rho,$ there is $\theta_0$ such that for any $\theta< \theta_0$ and sufficiently small $\e$,
 \begin{equation}
-M<\ul x(t,\rho,\theta):=\ul x_3(t,\rho)-L\theta \leq \ul x^1(t,\e) \leq x(t,\e), \qquad t\in [t_0, t_1].
 \label{ulx}
 \end{equation}
Then, using assumption (a4), we find that the solution $\ul y = \ul y(t,\rho,\theta,\e)$ to
\begin{equation}
\e \ul y'= \ul g(t, y, \rho,\theta,\e), \quad \ul y(0,\rho,\theta,\e) = \mathring y,
\label{lowsoly}
\end{equation}
 where $\ul g(t, y, \rho,\theta,\e) := g(t,\ul x(t,\rho,\theta), y,\e),$ satisfies
 $$
 \ul y(t,\rho,\theta,\e) \leq y(t,\e), \qquad t \in [t_0, t_1].
 $$
By construction, equation (\ref{lowsoly}) is in the form allowing for the application of Theorem \ref{thbtz}. We will not need, however, the full theorem but only the considerations for the lower solution.
As with $\bar g$, we note that $\ul g$ is a $C^2$ function with respect to all variables. We consider the function
\begin{equation}
\ul G(t,\rho,\theta,\epsilon)= \int _{t_{0}}^{t} \ul g_{y}(s, 0,\rho, \theta,\epsilon)ds
\label{ulG}
\end{equation}
and observe that $\ul g(t,0,0,0,\e) = \bar g(t,\e)= g(t, \bar x(t,\e),0,\e)$ and also $\ul g_{y}(t, 0,0,0,\epsilon) = \bar g_y(t,\e)= g_y(t, \bar x(t,\e),0,\e).$ Then $\ul G(t_0,\rho,\theta,0) =0$. Further, since $\ul G(\bar t^*,0,0,0) = \bar G(\bar t^*,0) =0$ and $\ul G_t(\bar t^*,0,0,0) = g_y(\bar t^*,0,0) >0$, the Implicit Function Theorem shows that for sufficiently small $\rho,\theta$ there is a $C^2$ function $\ul t^*=\ul t^*(\rho,\theta)$ such that $\ul G(\ul t^*(\rho,\theta),\rho,\theta,0) \equiv 0$ with $\ul t^*(\rho,\theta) \to \bar t^*$ as $\rho,\theta \to 0$.

Furthermore, since by (a4) and (a2) we have $g(t,x_1,y,0)\leq g(t,x_2,y,0)$ for $x_1\leq x_2$ and $g(t,x,0,0)=0$, we easily obtain
\begin{equation}
g_y(t,x_1,0,0) \leq g_y(t,x_2,0,0), \quad x_1\leq x_2.
\label{niergy}
\end{equation}
Since
$$
\ul x (t,\rho,\theta) \leq x(t,\e) \leq \bar x(t), \quad t \in [t_0, t_1]
$$
we find that $\ul G(t,\rho,\theta,0) \leq \bar G(t,0)$ and thus $\ul t^*(\rho,\theta)\geq \bar t^*$.

Denote by $\ul Y(t,\rho,\theta, \delta, \eta,\e)$ the solution defined by (\ref{ulY}) with $G$ replaced by $\ul G.$  We observe that the parameter $\delta$ is defined independently of $\rho, \theta$ and $\eta,$ hence $\ul G(t(\rho,\theta,\delta,\e),\rho,\theta,\e) - \delta(t-t_0) \equiv 0$ and
$$
\ul Y(t(\rho,\theta,\delta, \e),\rho,\theta, \delta, \eta,\e) = \eta.$$
This function $\ul Y$ is a lower solution to (\ref{lowsoly}) provided $\eta\leq \delta/k$, see (\ref{k}), where $k$ can be also made independent of any of the parameters.  So, we can find $\rho_0,\theta_0$ such that
$$
\sup\limits_{0\leq \rho\leq \rho_0, 0\leq \theta\leq \theta_0} \ul t^*(\rho,\theta) \leq \ti t <t_1.
$$
Then, for a given $\rho,\theta$ satisfying the above, we have
$$
t(\rho,\theta,\delta, \e) = t^*(\rho,\theta) + \omega(\delta,\e)
$$
and we can take $\delta, \e_1$ such that $\omega(\delta,\e)+ \ti t<t_1$ for all $\e<\e_1$. For such a $\delta$, we fix $\eta<\delta/k$ and then $\rho<\eta$. Then, for sufficiently small $\e$, $y(t(\rho,\theta,\delta, \e), \e)<\rho$ and, on the other hand,
$$y(t(\rho,\theta,\delta, \e), \e)\geq \ul Y(t(\rho,\theta,\delta, \e),\rho,\theta, \delta, \eta,\e) = \eta >\rho.$$
Thus, the assumption that there is $t_1>\bar t^*$ such that $y(t,\e)$ converges almost uniformly to zero on $]t_0,t_1[$ is false.\end{proof}

In the next step, we will investigate the behaviour of the solution beyond $\bar t^*$. Clearly, we cannot use  $\ul y$ defined by (\ref{lowsoly}) as a lower solution there since it is a lower solution only as long as $x(t,\e)\leq \rho$ which, as we know, is only ensured for $t<\bar t^*$. Thus, we have to find another \textit{a priori} upper bound for $x(t,\e)$ that takes into account the behaviour of $x(t,\e)$ beyond $\bar t^*$. For this we need to adopt an additional assumption which ensures that $x(t,\e)$ does not return to the region of attraction of $y = 0$. Let
\begin{equation}
\left.\frac{g_t}{g_x}+f\right|_{(t,x,y,\e)= (t, \psi(t), 0,0)}>0, \quad t\in [0,T].
\label{a9}
\end{equation}
\begin{remark} \label{remcross} \emph{Condition (\ref{a9}) has a clear geometric interpretation, see Fig.\ref{ass1}.  The normal to the curve $x = \psi(t)$ pointing towards the region $\{(t,x);\;x>\psi(t)\}$ is given by $(-\psi'(t), 1)$. However, we have $0 \equiv \phi(t,\psi(t))$, hence $\psi'=\left.-\phi_t/\phi_x\right|_{(t,x)=(t,\psi(t))}$ which, in turn, is given by $-g_t/g_x$ on $(t,x,y,\e) = (t,\psi(t), \phi(t,\psi(t)), 0) = (t,\psi(t), 0, 0)$ on account of $0
\equiv g(t,x,\phi(t,x), 0)$. Thus (\ref{a9}) is equivalent to
$$
(-\psi',1)\cdot (1,x') = (-\psi',1)\cdot (1,f), \quad (t,x,y,\e)= (t, \psi(t), 0,0),
$$
so that it expresses the fact that the solution $x$ of (\ref{a7}) cannot cross $x = \psi(t)$ from above. If the problem is autonomous, then (\ref{a9}) turns into
$$
\left. f\right|_{(x,y,\e)= (c, 0,0)}>0, \quad t\in [0,T],
$$
where  $x=\psi(\bar t_c)\equiv c$, which means that $\bar x(t)$ is strictly increasing crossing the line $x = c.$ }
\end{remark}
\begin{theorem}
Assume that, in addition to (a1)--(a9), inequality (\ref{a9}) is satisfied. Then
\begin{eqnarray}
\label{eq4.82b}
  \lim_{\epsilon\rightarrow0}{x}(t,\e)={x}_\phi(t), \quad ]\bar t^*, T],\\
  \lim_{\epsilon\rightarrow0}{y}(t,\e)=\phi(t,{x}_\phi(t)),\quad ]\bar t^*, T],\label{eq4.82c}
  \end{eqnarray}
where $\bar{x}_\phi(t)$ satisfies
\begin{equation}
 x_\phi' = f(t, x_\phi, \phi(t,x_\phi),0), \quad  x_\phi(\bar t^*)= \bar x(\bar t^*)
\label{xphi}
\end{equation}
and the convergence is almost uniform on $]\bar t^*,T]$.
\label{th3.2}
\end{theorem}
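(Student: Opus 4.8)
The plan is to reduce the statement, on each interval $[\bar t^*+\sigma,T]$ with $\sigma>0$ small, to an application of the Tikhonov theorem in which the relevant quasi steady state is now the branch $y=\phi(t,x)$, and then to send $\sigma\to0$. Three points have to be secured for this: (A) $x(t,\e)$ must remain in the region $\{x>\psi(t)\}$, where by (a6) the branch $y=\phi(t,x)$ is isolated and attracting while $y\equiv0$ is repelling; (B) the datum $y(\bar t^*+\sigma,\e)$ must lie, uniformly for small $\e$, in the basin of attraction of $y=\phi(t,x)$, i.e. be bounded away from $0$; and (C) the datum $x(\bar t^*+\sigma,\e)$ must be pinned, as $\e\to0$ and then $\sigma\to0$, to $\bar x(\bar t^*)$, so that the limiting slow trajectory is precisely the solution $x_\phi$ of (\ref{xphi}).

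I would first dispose of the easy a priori bounds, reusing the mechanism of the proof of Theorem \ref{th21}: since $y(t,\e)\geq0$, assumption (a3) gives $x(t,\e)\leq\bar x(t,\e)\to\bar x(t)$ uniformly on $[t_0,T]$, and then (a4) gives $0\leq y(t,\e)\leq\bar y(t,\e)$ with $\bar y$ solving (\ref{eq1n}), which by Theorem \ref{thbtz} (whose hypotheses were already verified for (\ref{eq1n}) inside the proof of Theorem \ref{th21}) converges to $\varphi(t)=\phi(t,\bar x(t))$ almost uniformly on $]\bar t^*,T]$. For (C), Theorem \ref{th21} already yields $x(t,\e)\to\bar x(t)$ on $[t_0,\bar t^*[$, while $x'$ is uniformly bounded because $f\in C^2(\ol V)$ and $y$ stays bounded (cf. \cite[Proposition 3.4.1]{BaLa14}); hence $\bar x(\bar t^*-\sigma)-C\sigma-o(1)\leq x(\bar t^*+\sigma,\e)\leq\bar x(\bar t^*+\sigma,\e)$ as $\e\to0$, so $x(\bar t^*+\sigma,\e)$ lies within $O(\sigma)$ of $\bar x(\bar t^*)$. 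Since $\bar t^*>\bar t_c$ and, by (a7), $\bar x-\psi$ has no zero on $]\bar t_c,T]$ with $\bar x(T)>\psi(T)$, we have $\bar x(\bar t^*)>\psi(\bar t^*)$, which already places the datum strictly above $\psi$. For (B), a lower-solution argument of the kind used in the proof of Theorem \ref{th21} — a lower solution for $y$ built from a lower bound for $x$ on a short interval straddling $\bar t^*$, then allowed to grow away from the repelling state $y\equiv0$ — gives $\liminf_{\e\to0}y(\bar t^*+\sigma,\e)>0$ (consistently with the last assertion of Theorem \ref{th21} that $]t_0,\bar t^*[$ is the largest interval of almost uniform convergence of $y$ to zero).

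\textbf{The crux is step (A)}: that $x(t,\e)>\psi(t)$ on all of $[\bar t^*+\sigma,T]$, not merely near $\bar t^*$. Condition (\ref{a9}) is exactly what guarantees this for the \emph{limiting} dynamics: as noted in Remark \ref{remcross}, at any point where a solution of (\ref{xphi}) meets $x=\psi(t)$ one has $\phi(t,\psi(t))=0$, so its slope there equals $f(t,\psi(t),0,0)>\psi'(t)$, whence the meeting is transversal and from below; consequently the solution $x_\phi$ of (\ref{xphi}) with $x_\phi(\bar t^*)=\bar x(\bar t^*)$, and by continuous dependence every solution issued nearby, satisfies $x_\phi(t)\geq\psi(t)+2\mu_0$ on the compact interval $[\bar t^*,T]$ for some $\mu_0>0$. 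The obstruction is that this concerns the $y\equiv0$ reduced field, whereas $x(t,\e)$ is driven by $f(t,x,y(t,\e),\e)$ with $y(t,\e)>0$, which by (a3) \emph{lowers} the drift of $x$, so the barrier cannot be transferred directly. I would therefore argue by continuation coupled to Tikhonov: letting $[\bar t^*+\sigma,\tau_\e]$ be the maximal interval on which $x(t,\e)\geq\psi(t)+\mu$ for a fixed $\mu\in(0,\mu_0)$, on this interval $y=\phi(t,x)$ is the isolated attracting quasi steady state with $y(\bar t^*+\sigma,\e)$ in its basin, so a Tikhonov-type theorem (e.g. \cite[Theorem 1B]{LS1}) forces $x(t,\e)$ to converge there to a solution of (\ref{xphi}) which, by the preceding estimate, stays $\geq\psi(t)+3\mu_0/2>\psi(t)+\mu$; hence for small $\e$ one must have $\tau_\e=T$, i.e. $x(t,\e)\geq\psi(t)+\mu$ throughout $[\bar t^*+\sigma,T]$.

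With (A)--(C) secured, one last application of the Tikhonov theorem on $[\bar t^*+\sigma,T]$ gives $x(t,\e)\to X_\sigma(t)$ and $y(t,\e)\to\phi(t,X_\sigma(t))$ almost uniformly on, respectively, $[\bar t^*+\sigma,T]$ and $]\bar t^*+\sigma,T]$, where $X_\sigma$ solves (\ref{xphi}) with $X_\sigma(\bar t^*+\sigma)$ equal to a (subsequential) limit of $x(\bar t^*+\sigma,\e)$; by (C) this value tends to $\bar x(\bar t^*)=x_\phi(\bar t^*)$ as $\sigma\to0$, so by continuous dependence $X_\sigma\to x_\phi$ uniformly on compact subsets of $]\bar t^*,T]$, and since $\sigma>0$ is arbitrary we obtain (\ref{eq4.82b})--(\ref{eq4.82c}). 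The principal difficulty is the bootstrap in (A); a subsidiary one, handled exactly as in the proof of Theorem \ref{th21}, is to keep the constants in the comparison and Gronwall estimates independent of $\e$ and of the auxiliary margins.
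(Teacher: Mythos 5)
Your overall strategy (sandwich the solution between comparison solutions and reduce to the one--dimensional theory / Tikhonov theorem on $[\bar t^*+\sigma,T]$) is the same as the paper's, and your points (A) and (C) are handled essentially as in the paper (the paper secures (A) not by a continuation--Tikhonov bootstrap but by showing, via (\ref{a9}) and a Lyapunov function, that the region $\mc V_\omega$ bounded on the left by $x=\psi(t)+\varrho$ and above by $y=\phi(t,x)+\omega$ is invariant from some $t_1\in\,]\bar t_c,\bar t^*[$ onward; this is cleaner and, importantly, is needed for more than (A), see below). The genuine gap is in (B), which is the crux of the theorem and which you dispatch in one sentence. The final assertion of Theorem \ref{th21} only says that $y(\cdot,\e)\to 0$ is not almost uniform past $\bar t^*$; it does not give $\liminf_{\e\to0}y(\bar t^*+\sigma,\e)>0$, and you correctly do not claim that it does. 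But the ``lower-solution argument of the kind used in the proof of Theorem \ref{th21}'' that you invoke instead is not available as stated, for two concrete reasons. First, the lower solution for $y$ must be driven by a lower bound $\ul X$ for $x$ that is defined \emph{beyond} $\bar t^*$; to build such a bound you need an a priori \emph{upper} bound for $y$ beyond $\bar t^*$ (since by (a3) large $y$ depresses $x$), and the bound $y\leq\rho$ from Theorem \ref{th21} is only valid up to $\bar t^*$. The required upper bound is precisely the invariant-layer estimate $y\leq\phi(t,x)+\omega$ of the paper's Step 1, which uses (\ref{a9}) — so (\ref{a9}) is needed for (B), not only for (A). Second, for Theorem \ref{thbtz} to apply to the comparison problem $\e\ul Y'=g(t,\ul X(t),\ul Y,\e)$, the quasi steady states $y=0$ and $y=\phi(t,\ul X(t))$ must intersect exactly once; this forces you to prove that $\ul X$ stays uniformly above $\psi$ on an interval $[\hat t,\ti t\,]$ with $\ti t>\bar t^*$ (the paper's estimate (\ref{posx5})), which is a delicate quantitative step and which also means the detachment of $y$ is first obtained only on a possibly short interval $]\ul t^*,\ti t\,]$ with $\ul t^*\geq\bar t^*$, after which a further bootstrap (the paper's Steps 4--6) is required to reach $T$ and to identify the limit as $\phi(t,x_\phi(t))$ rather than merely a one-sided bound.

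A secondary remark: your continuation argument for (A) applies a Tikhonov-type theorem on the $\e$-dependent maximal interval $[\bar t^*+\sigma,\tau_\e]$ with initial $y$-datum that is only known to be positive; until (B) is established this is circular, since the uniform-in-$\e$ validity of the Tikhonov limit requires the datum to be bounded away from the repelling state $y=0$ uniformly in $\e$ (otherwise the initial layer need not be $o(1)$ in slow time). Once (B) is repaired along the lines above, your plan for (A), (C) and the final limit $\sigma\to0$ goes through and is equivalent to the paper's Steps 4--6.
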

\begin{proof} Since the proof is quite long, we shall begin with its brief description. Note that in the notation here we suppress the dependance of the construction on all auxiliary parameters.  The idea is to use the one dimensional argument, as in Theorem \ref{th21}; that is,  to construct an appropriate lower solution but this time on $[t_0,T]$. As mentioned above, for $t<\bar t^*$ we can use $\ul x$ and $\ul y,$ but beyond $\bar t^*$ we must provide a new construction. First, using the classical Tikhonov approach, we show that if $y(t,\e),$ with sufficiently small $\e,$ enters the layer  $\phi-\omega<y<\phi+\omega$ at some $t>\bar t_c$, then it stays there. Hence, in particular, we obtain an upper bound for $y(t,\e)$ for $t>t_c$. Combining it with the upper bound obtained in the proof of Theorem \ref{th21},  we obtain an upper bound for $y$ on $[t_0,T]$ which is, however, discontinuous. Using (a3), this gives a lower solution $\ul X$ for $x(t,\e)$ on $[t_0, T],$ that can be modified to be a differentiable function. It is possible to prove that $\ul X$ stays uniformly bounded away from $\psi$ but only up to some $\ti t>\bar t^*$. This fact is essential as otherwise the equation for $\ul Y,$ constructed using $\ul X$ as in (\ref{lowsoly}), would have quasi steady states intersecting in more than one point (whenever $\ul X(t) =\psi(t)$, see the considerations following (\ref{eq1n})). Hence, we only can continue considerations on $[t_0,\ti t\,]$. Now, as in the one dimensional case, the constructed $\ul Y$ converges on $]t_0,\ti t\,]$ to some quasi steady state, which is close to $\phi(t,\ul X(t))$ but, since we only have $y(t,\e)\geq \ul Y(t,\e)$, this is not sufficient for the convergence of $y(t,\e)$. However, this estimate allows for constructing an upper solution for $x(t,\e)$ and hence an upper solution for $y(t,\e)$. By careful application of the regular perturbation theory for $x(t,\e)$ we prove that $y(t,\e)$ is sandwiched between two functions which are small perturbations of $\phi(t,x_\phi(t))$, where $x_\phi$ satisfies  (\ref{xphi}). Thus $y(t,\e)$ converges to $\phi(t,x_\phi(t))$ on $]t_0,\ti t\,]$. This shows, in particular, that the solution enters the layer $\phi-\delta<y<\phi+\delta$ for arbitrarily small $\delta$ provided $\e$ is small enough, and the application of the Tikhonov approach with a Lyapunov function allows for extending the convergence up to $T$.

 \textbf{Step 1. An upper bound for $y(t,\e)$ after $\bar t_c$. }Let us take arbitrary $t_1 \in ]\bar t_c, \bar t^*[$. By (\ref{a9}), there is $\varrho_0>0$ such that $\bar x(t_1)> \psi(t_1) +\varrho_0$.  Since  $x(t_1,\e) \to \bar x(t_1)$ and $y(t_1,\e)\to 0,$ there is $\e_0$ such that for any $0<\e<\e_0$ we have $x(t_1,\e)>\psi(t_1) +\rho_0/2$ and $0<y(t_1,\e)<\rho,$ as established in the proof of the previous theorem.
%Since $\bar x(t_1)>\psi(t_1)$, we have $\phi(t_1, \bar x(t_1)) >0.$
Let
$$
\Psi(t,x,y,\e) := \frac{g_t(t,x,y,\e)}{g_x(t,x,y,\e)}+f(t,x,y,\e).
$$
\begin{figure}[htp]
\begin{center}
  % Requires \usepackage{graphicx}
  % replace aims_logo.eps by your figure file name
  \includegraphics[scale = 0.25]{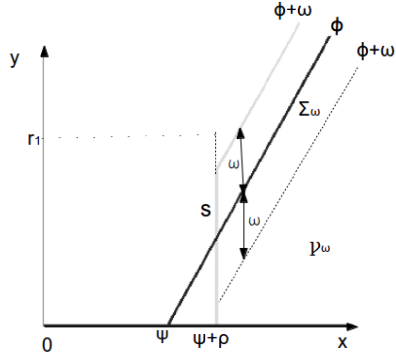}\\
  \caption{The cross-section of the construction for a given $t$. }\label{ass2}
  \end{center}
\end{figure}
By (\ref{a9}), we have $\Psi(t, \psi(t), 0,0)>0$ for $t\in [0,T]$ and thus there is $\alpha_1$, $r_1,r_2,\e_0$ such that
\begin{equation}
\Psi(t,\psi(t)+\varrho,y,\e)\geq \alpha_1
\label{Psi}
\end{equation}
for all $|y|\leq r_1, |\varrho|<r_2$, $|\e|<\e_0$. Consider now the surface $S = \{(t,x, y);\; t\in [0,T], x = \psi(t)+\varrho, 0\leq y\leq r_1\}$. By continuity, there is $0<\varrho <\min\{\varrho,r_2\}$ such that
$$
\max\limits_{t\in [0,T]}\phi(t, \psi(t)+\varrho) <r_1.
$$
Let
$$
\alpha_\varrho= \min\limits_{t\in [0,T], \psi(t)+\varrho\leq x\leq M}\phi(t, x)>0
$$
and, for arbitrary $0<\omega <\min\{\alpha_\varrho/2, r_1-\max\limits_{t\in [0,T]}\phi(t, \psi(t)+\varrho),$ consider the layer
\begin{equation}
\Sigma_\omega = \{(t,x,y);\;t\in [0,T], \psi(t) +\varrho \leq x\leq M, \phi(t,x)-\omega \leq y \leq \phi(t,x)+\omega\}.
\label{siom}
\end{equation}
and the domain
$$
\mc V_\omega =  \{(t,x,y);\;t\in [0,T], \psi(t) +\varrho \leq x\leq M, 0 \leq y \leq \phi(t,x)+\omega\}.
$$
Note that `left' wall of $\mc V_\omega$, $\mc V_{\omega,l}:=\mc V_\omega\cap S$ is contained in the set $\{(t,x,y);\Psi(t,x,y,\e)>0\}$ and thus, by Remark \ref{remcross}, no trajectory can leave $\mc V_\omega$ across $\mc V_{\omega,l}$. Using a standard argument with the Lyapunov type function $V(t) = (y(t,\e)-\phi(t, x(t,\e)))^2$, see e.g. \cite[pp. 86-90]{BaLa14} or \cite[p. 203]{tivasv}, if the solution is in $\Sigma_\omega$, it cannot leave this domain through the surfaces $y = \phi(x,t)\pm \omega.$  Hence, in particular, we have $\{x(t,\e), y(t,\e)\}_{t_1 \leq t\leq T } \in \mc V_\omega.$

 \textbf{Step 2. Construction of the lower solution for $x(t,\e)$ on $[t_0,T]$.} By Step 1, for an arbitrary fixed $t_1\in ]\bar t_c,\bar t^*[,$ there is $\omega$ such that  $y(t,\e)$;  $0<y(t,\e)< \phi(t, x(t,\e))+\omega$ for $t\in [t_1, T].$  On the other hand, for any $\rho>0$ and sufficiently small $\theta>0$ we have $0<y(t,\e)<\rho$ on $[t_0+\theta, \bar t^*-\theta]$ for all $\e<\e_1 = \e(\rho,\theta)$. Then, by (\ref{ulx}),  we have in particular $\ul x(t,\theta, \rho) \leq x(t,\e)$ for $t\in [t_0,\bar t^*-\theta].$

Consider now the solution to
$$
x_4' = f(t,x_4, \phi(t,x_4)+\omega,\e), \quad x_4(\hat t) = \ul x(\hat t,\theta, \rho), \quad t\in [\hat t,T],
$$
for some some $\hat t\in ] t_1, \bar t^*-\theta[$. Using Remark \ref{rem23}, we see that  $x_4(t,\theta, \rho, \e) \leq x(t,\e)$ for all sufficiently small  $\e$. At the same time, using the regular perturbation theory, for any $\vartheta>0$ there is,  possibly smaller, $\e_5$ such that for all $\e<\e_5$ and $t\in [\hat t, T]$ the solution $x_5(t) = x_5(t,\hat t, \theta, \rho)$ to
\begin{equation}
x_5' = f(t,x_5,\phi(t,x_5),0), \quad x_5(\hat t) = \ul x(\hat t,\theta, \rho), \quad t\in [\hat t,T],
\label{x5}
\end{equation}
satisfies
$$
|x_5(t,\theta,\rho) - x_4(t,\theta,\rho,\e)|< C\vartheta
$$
on $[\hat t, T]$, with $C$ independent of $\theta, \rho, \e, \vartheta, \hat t$. Then we construct the function
$$
\ul X(t,\theta,\rho,\vartheta) = -C\vartheta + \left\{\begin{array}{lcl} \ul x(t,\theta, \rho)&\mathrm{for}& t\in [t_0,\hat t],\\x_5(t,\theta, \rho)&\mathrm{for}& t\in ]\hat t,T],\end{array}\right.
$$
which clearly satisfies
\begin{equation}
\ul X(t,\theta,\rho,\vartheta)  \leq x(t,\e), \quad t\in [t_0, T].
\label{lowsolx}
\end{equation}
Next we prove that $\ul X$ stays uniformly away from $\psi(t)$ in some neighbourhood of $\bar t^*$. For this, we note that both $\bar x$ and $\ul x$ are defined on $[t_0, T]$ and close to each other, by the definition of $x_3$ and (\ref{ulx}) (for small $\rho$). Thus, by (a7), there are $\Omega''\leq \Omega'$ and $t^\#<\bar t^*$ such that $\bar x\geq \psi + \Omega'$ and $\ul x\geq \psi + \Omega''$ on $[t^\#,\bar t^*].$ Let $0<\Omega<\Omega''$. Then, by (a1), we see that $\inf_{\bar V}f\geq K$ for some $K>-\infty$ (which follows, in particular, since  $0\leq y(t,\e)\leq \phi(t,x(t,\e))$ for $t\geq \hat t$) and hence
$$
x_5(t) \geq x_5(\hat t) + K(t-\hat t).
$$
Then, for any $\hat t \in ]t^\#,\bar t^*[$, we have
\begin{eqnarray*}
\ul X(t,\theta,\rho,\vartheta) &=& x_5(t) -C\vartheta  \geq x_5(\hat t) + K(t-\hat t) = \ul x(\hat t) + K(t-\hat t)\geq \psi(\hat t) + \Omega'' + K(t-\hat t)
\\&=&\psi(t)+\Omega +(\psi(\hat t)-\psi(t) + K(t-\hat t) - C\vartheta + \Omega'' -\Omega).
\end{eqnarray*}
Since the constants $C, \Omega, \Omega''$ can be made independent of $\hat t\in [t^\#, \bar t^*],$ and by the regularity of $\psi,$ we see that there is $\ti t> \bar t^*,$  $\hat t$ sufficiently close to $\bar t^*,$ and $\vartheta>0$ such that
\begin{equation}
\ul X(t,,\theta,\rho,\vartheta)\geq \psi(t)  +\Omega, \quad t\in [\hat t, \ti t\,].
\label{posx5}
\end{equation}
\textbf{Step 3. Construction of the lower solution for $y(t,\e)$ on $[t_0,T]$ and its behaviour for $t\in ]\bar t^*, \ti t\,]$.} Let us now consider the solution $\ul Y(t,\theta,\rho,\vartheta,\e)$ of the Cauchy problem
\begin{equation}
\e\ul Y' = g(t, \ul X(t,\theta, \rho, \vartheta), \ul Y,\e), \qquad \ul Y(t_0,\theta,\rho,\vartheta, \e) = \mathring y.
\label{globulY}
\end{equation}
We observe that the above equation has two  quasi-steady states, $y\equiv 0$ and $y = \phi(t,\ul X(t,\theta, \rho, \vartheta)),$ that only intersect at $\ul t_c,$ which is close to
$\bar t_c, $ at least on $[t_0,\ti t\,]$. Moreover, for $t<\hat t$ the lower solution $\ul x$ can be made as close as one wishes to $\bar x$.
Though $\ul X$ is not a $C^2$ function, as required by  Theorem \ref{thbtz}, we can use the comment at the end of \ref{secB} and only consider $t\geq \hat t$. Here, instead of only a Lipschitz function $\ul X$, we have the function $x_5(t,\theta,\rho)-C\vartheta$ that is smooth with respect to all parameters -- note that $\rho$ and $\theta$ enter into the formula through a regular perturbation of the equation and the initial condition.  We define the function $\ul {\mc G}$ for (\ref{globulY}) by
\begin{equation}
\ul {\mc G}(t,\rho,\theta,\vartheta,\epsilon)= \int _{t_{0}}^{t} g_y (s, \ul X(s,\theta, \rho, \vartheta), 0,\e)ds.
\label{globulG}
\end{equation}
We observe that for $t<\hat t$ we have, by (\ref{niergy}),
$$
\ul {\mc G}(t,\rho,\theta,\vartheta,0) = \int _{t_{0}}^{t} g_y (s, \ul x(s,\theta, \rho)-C\vartheta, 0,0)ds\leq \ul G(t,\rho, \theta, 0).
$$
and also, since $\ul X(t) \leq x(t,\e)\leq \bar x(t)$ for any $t\in [t_0,T]$,
\begin{equation}
\ul {\mc G}(t,\rho,\theta,\vartheta,0) \leq \bar G(t,0).
\label{ulgbarg}
\end{equation}
This means that $\ul{\mc G}<0$ on $]0,\hat t]$ and $\ul{\mc G} \to 0$ with $\hat t \to \bar t^*$ and $\theta, \rho, \vartheta \to 0$. Now, writing
$$
\ul {\mc G}(t,\rho,\theta,\vartheta,0) = \int _{t_{0}}^{\hat t} g_y (s, \ul x(s,\theta, \rho)-C\vartheta, 0,0)ds + \int _{\hat t}^{t} g_y (s, x_5(t,\theta, \rho)-C\vartheta, 0,0)ds
$$
and, using (a6) and (\ref{posx5}) to the effect that $g_y (t, x_5(t,\theta, \rho)-C\vartheta, 0,0) \geq L$ on $[\hat t, \ti t\,]$ for some $L>0,$ we see that for sufficiently small $\bar t^*-\hat t, \theta, \rho$ and $\vartheta$ we have
$$
\int_{\bar t^*}^{\ti t} g_y (s, x_5(s,\theta, \rho)-C\vartheta, 0,0)ds \geq L(\ti t-\bar t^*) > \int _{t_{0}}^{\hat t} g_y (s, \ul x(s,\theta, \rho)-C\vartheta, 0,0)ds,
$$
since the last term is negative. Therefore there is a solution $\ul t^* = \ul t^*(\hat t, \rho, \theta, \vartheta)<\ti t$ to $\ul {\mc G}(t,\rho,\theta,\vartheta,0) =0$. Moreover, this solution is unique as $\ul {\mc G}$ is strictly monotonic for $t\geq \hat t$, by (\ref{ulgbarg}) it satisfies  $\ul t^*>\bar t^*$ and $\ul t^* \to \bar t^*$ if  $\bar t^*-\hat t, \theta, \rho, \vartheta \to 0$. Now, for a fixed $\hat t, \rho, \theta, \vartheta$, $\ul G$ is a $C^2$-function of $(t,\e)\in ]\hat t, \bar t^*[\times\; ]\!-\bar \e, \bar \e[$ where $\bar \e$ is chosen so that (\ref{lowsolx}) is satisfied for all $0<\e<\bar \e$. Thus, we can apply Theorem \ref{thbtz} with the weaker assumptions discussed at the end of \ref{secB}  to claim that
\begin{equation}
\lim\limits_{\e \to 0} \ul Y(t,\theta,\rho,\vartheta, \e) = \phi(t, x_5(t)-C\vartheta)
\label{ulYconv}
\end{equation}
almost uniformly on $]\ul t^*, \ti t\,]$. Because of this, for any $\tau\in ]\ul t^*, \ti t[$ and any $\delta'>0$ we can find $\ti \e>0, \ti \vartheta >0$ such that for any $\e<\ti \e, \vartheta <\ti \vartheta$ and $t\in [\tau, \ti t]$ we have
\begin{equation}
y(t,\e)\geq  \phi(t, x_5(t)) -\delta'.
\label{1lest}
\end{equation}
 \textbf{Step 4. Upper solutions for $x(t,\e)$ and $y(t,\e)$ on $[t_0,\ti t\,]$.} Thanks to these estimates, we see that the solution $x_6 = x_6(t,\e)$ of the problem
\begin{equation}
x_6' = f(t,x_6, \phi(t,x_5)-\delta',\e), \quad x_6(\tau,\e) = \bar x(\tau, \e)
\label{x6}
\end{equation}
satisfies, for sufficiently small $\e$,
$$
x_6(t,\e) \geq x(t,\e)
$$
on $t\in [\tau,\ti t\,]$. Thus, we can construct a composite upper bound for $x(t,\e)$ on $[t_0,\ti t\,]$ as
$$
\bar X(t,\e) = \left\{\begin{array}{lcl} \bar x(t,\e)&\mathrm{for}& t\in [t_0,\tau],\\x_6(t,\e)&\mathrm{for}& t\in ]\tau,\ti t\,]
\end{array}\right.
$$
and hence a new upper bound for $y(t,\e),$ defined to be the solution to
\begin{equation}
\e\bar Y' = g(t, \bar X(t,\e), \bar Y,\e), \qquad \bar Y(t_0, \e) = \mathring y.
\label{globulY1}
\end{equation}
We observe that for $t\in [t_0,\tau]$ we have
$$
g(t, \bar X(t), 0,0) = g(t,\bar x(t),0,0).
$$
Hence
\begin{equation}
\bar {\mc G}(t,0)= \int _{t_{0}}^{t} g_y (s, \bar X(s,0), 0,0)ds
\label{globarG0}
\end{equation}
coincides with $\bar G(t,0)$ on $[t_0,\tau]$ with $\tau>\bar t^*$ and thus $\bar{\mc G}(\bar t,0) <0$ for $t\in ]t_0, \bar t^*[$, $\bar{\mc G}(\bar t^*,0) =0$ and $\bar{\mc G}(\bar t,0) >0$ for $t\in ]\bar t^*,\ti t[$ since, by (\ref{lowsolx}) and (\ref{posx5}),  $x(t,\e)>\psi(t)$ on $[\bar t^*,\tau]$ and $x_6(t,\e)> \psi(t)$ on $[\tau, \ti t\,]$. Thus the assumptions of Theorem \ref{thbtz} are satisfied and we see that
\begin{equation}
\lim\limits_{\e \to 0} \bar Y(t,\e) = \phi(t, x_6(t,0))
\label{barYconv}
\end{equation}
uniformly on $[\tau, \ti t\,]$.

\textbf{Step 5. Convergence of $(x(t,\e), y(t,\e))$ on $]\bar t^*, \ti t\,]$.} Now, $x_6(t,0)$ is the solution to
\begin{equation}
x_6' = f(t,x_6, \phi(t,x_5)-\delta',0), \quad x_6(\tau,\e) = \bar x(\tau, 0),
\label{x60}
\end{equation}
which is a regular perturbation of
$$
x' = f(t,x,\phi(t,x_5),0), \quad x(\hat t) = \ul x(\hat t,\theta, \rho), \quad t\in [\hat t,T].
$$
But, by the uniqueness, the solution of the latter is $x_5$ and thus, for any $\delta''>0$ we can find $\hat t,\tau, \theta, \rho, \vartheta, \delta',\e''$ such that for all $\e<\e''$ we have
$$
|x_6(t,0)- x_5(t)|<\delta''
$$
on $[\tau, \ti t\,].$ We need some reference solution independent of the auxiliary parameters so we denote by $x_\phi$ the function satisfying
$$
x_\phi' = f(t,x_\phi, \phi(x_\phi),0), \quad  x_\phi(\bar t^*)= \bar x(\bar t^*)
$$
Clearly, this equation is a regular perturbation of both (\ref{x60}) and (\ref{x5})
and thus for any $\delta'''>0$, after possibly further adjusting $\e$, we find
\begin{equation}
\phi(t,x_\phi(t))-\delta''' \leq y(t,\e) \leq \phi (t,x_\phi(t)) +\delta''', \quad t\in [\tau, \ti t\,]
\label{phi1}
\end{equation}
which shows that
\begin{equation}
\lim\limits_{\e \to 0}  y(t,\e) = \phi(t, x_\phi(t))
\label{yconv}
\end{equation}
uniformly on $t\in [\tau, \ti t\,]$. This in turn shows that
\begin{equation}
\lim\limits_{\e \to 0} x(t,\e) =  x_\phi(t)
\label{yconv'}
\end{equation}
uniformly on $t\in [\tau, \ti t\,].$

\textbf{Step 6. Convergence of $(x(t,\e), y(t,\e))$ on $]\bar t^*, T]$.}  Eq. (\ref{yconv'}) allows us to re-write (\ref{phi1}) as
$$
\phi(t,x(t,\e))-\ti\delta \leq y(t,\e) \leq \phi (t,x(t,\e)) +\ti \delta, \quad t\in [\tau, \ti t\,],
$$
for some, arbitrarily small, $\ti \delta>0$. Using the argument with the Lyapunov function and the notation from Step 1, the trajectory will not leave the layer $\Sigma_{\ti\delta}$. But then, by the standard argument as in e.g. \cite[pp. 86-90]{BaLa14}, we obtain
\begin{equation}
\lim\limits_{\e \to 0} x(t,\e) =  x_\phi(t)
\label{xconv}
\end{equation}
uniformly on $t\in [\tau, T]$ and consequently
$$
\lim\limits_{\e \to 0}  y(t,\e) = \phi(t, x_\phi(t))
$$
uniformly on $t\in [\tau, T]$. Since we could take $\tau >\bar t^*$ arbitrarily close to $\bar t^*$, we obtain the thesis.
\end{proof}

Next, we provide a two-dimensional counterpart of Theorem \ref{thbtz'}, in which the stability of the quasi steady states is reversed. It provides conditions for an immediate switch of stabilities but, due to the structure of the problem, covers a different class of problems than e.g. \cite[Theorem 2]{LS1} or \cite[Theorem 1.1]{but2004}. More precisely, we have   \begin{theorem}\label{th33}
Consider problem (\ref{sys1}) with assumptions (a1), (a2), (a8)-(a9), (\ref{a9}) and
\begin{description}
 \item (a5') The solution of the equation
 \begin{equation}
 0 = g(t, x,y,0)
 \label{degeq'}
 \end{equation}
 in $\bar I_T\times \bar I_N\times \bar I_M$ consists of $y =0$ and $y = \phi(t,x),$ where $\phi \in C^2( \bar I_T\times\bar  I_M)$. The equation
 \begin{equation}
 0 = \phi(t,x)
 \label{switch'}
 \end{equation}
 for each $t \in  \bar I_T$ has a unique simple solution $]0, M[ \ni x = \psi(t)\in C^2( \bar I_T)$. We assume that $\phi(t,x)>0$ for $x-\psi(t)<0$ and $\phi(t,x)<0$ for $x-\psi(t)>0$.
 \item (a6')
 $$
 \begin{array}{cccc}
 g_y(t,x,0,0)>0&\mathrm{and}& g_y(t,x,\phi(t,x),0)<0&\mathrm{for}\;x-\psi(t)<0,\\
 g_y(t,x,0,0)<0& \mathrm{and}&g_y(t,x,\phi(t,x),0)>0&\mathrm{for}\;x-\psi(t)>0.\end{array}$$
 \item (a7') The solution $x_\phi$ to the problem
\begin{equation}
 x' = f(t,x,\phi(t,x),0), \quad x(t_0) =\, \mathring x,
 \label{a7'}
 \end{equation}
 with $-M<\,\mathring x\, <\psi(t_0)$ satisfies $x_\phi(T)> \psi(T)$ and there is exactly one $t_c \in ]t_0, T[$ such that $x_\phi(t_c) = \psi(t_c)$.
\end{description}
Then  the solution $(x(t,\e),y(t,\e))$ of (\ref{sys1}) satisfies\\
(a)  \begin{eqnarray}
  \lim_{\epsilon\rightarrow0}{x}(t,\e)&=&{x}_\phi(t) \quad \mathrm{on\;}[t_0, t_c[,\nn\\
  \lim_{\epsilon\rightarrow0}{y}(t,\e)&=&\phi(t,{x}_\phi(t))\quad \mathrm{on\;}]t_0,t_c[,\label{eq4.82d}
  \end{eqnarray}
and the convergence is almost uniform on respective intervals;\\
(b) \begin{eqnarray}
  \lim_{\epsilon\rightarrow0}x(t,\e)&=&\bar{x}(t) \quad \mathrm{on\;}[t_c,T], \nn\\
  \lim_{\epsilon\rightarrow0}{y}(t,\e)&=&0 \quad \mathrm{on\;}[t_c,T[,\label{eq4.82e}
  \end{eqnarray}
where $\bar{x}(t)$ satisfies (\ref{a7}) with $\bar{x}(t_c) = x_\phi(t_c)$
and the convergence is uniform.\label{mth2}
\end{theorem}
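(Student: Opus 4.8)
The plan is to treat the two regimes separately, as in Theorem~\ref{thbtz'}: on $[t_0,t_c[$ the solution should stay near the attracting branch $y=\phi(t,x)$, so that $x$ tracks the solution $x_\phi$ of (a7'), and since $\phi(t,x_\phi(t))\to\phi(t_c,\psi(t_c))=0$ as $t\uparrow t_c$, the trajectory reaches the transcritical point $(t_c,\psi(t_c),0)$ and is then caught by the newly attracting branch $y=0$ with no delay. As a preliminary I would note that (a7'), $-M<\mathring x<\psi(t_0)$, and the transversality inequality (\ref{a9}) --- which, by the computation of Remark~\ref{remcross}, forbids $x_\phi$ from meeting $x=\psi(t)$ other than transversally from below --- force $x_\phi(t)<\psi(t)$ on $[t_0,t_c)$ and $x_\phi(t)>\psi(t)$ on $(t_c,T]$; the same computation shows that the reduced solution $\bar x$ of (\ref{a7}) with $\bar x(t_c)=x_\phi(t_c)=\psi(t_c)$ satisfies $\bar x(t)>\psi(t)$ on $(t_c,T]$. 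Part (a) then follows by applying the Tikhonov theorem (Theorem~\ref{tikhonovth'}) on each $[t_0,\bar t]$ with $\bar t<t_c$: there the graph $t\mapsto(t,x_\phi(t),\phi(t,x_\phi(t)))$ lies in the open set $\{x<\psi(t)\}$, where by (a5')--(a6') $y=\phi(t,x)$ is an isolated root of $g(\cdot,0)=0$ and a uniformly asymptotically stable equilibrium of the fast equation, $\mathring y>0$ lies in its basin of attraction, and the reduced equation is exactly (a7'); letting $\bar t\uparrow t_c$ yields (\ref{eq4.82d}) with almost uniform convergence.

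For part (b), given small $r,\omega,\delta>0$ I would use part (a) to choose $\bar t<t_c$ close enough to $t_c$ (so that $\psi(\bar t)-x_\phi(\bar t)<r$) and then $\e$ small, so that $x(\bar t,\e)\in(\psi(\bar t)-r,\psi(\bar t))$ and $0\le y(\bar t,\e)<\delta<\omega$. Introduce the continuous, piecewise-$C^2$ composite quasi steady state $Y^*(t,x):=\max\{\phi(t,x),0\}$. By (a5') the only roots of $g(t,x,\cdot,0)$ in $\bar I_N$ are $0$ and $\phi(t,x)$, so the sign pattern prescribed by (a6') gives $g(t,x,y,0)<0$ for $Y^*(t,x)<y<N$, whence $g(t,x,y,\e)<0$ for $y>Y^*(t,x)+\omega$ once $\e$ is small. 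The heart of the argument is the region
$$
\mc R:=\{(t,x,y):\ \bar t\le t\le T,\ \psi(t)-r\le x\le\psi(t)+M',\ 0\le y\le Y^*(t,x)+\omega\}\subset\ol V ,
$$
which I claim is forward invariant for small $\e$. Indeed, the face $y=0$ is invariant by (a2); on the top face $y=Y^*(t,x)+\omega$, which lies at distance $\ge\omega$ from both roots $0$ and $\phi(t,x)$ of $g(t,x,\cdot,0)$, $g$ is negative and bounded away from $0$ (this covers the delicate neighbourhood of $(t_c,\psi(t_c),0)$, where $Y^*=0$ and $g_y=0$ but $\omega$ is a fixed gap), so no trajectory escapes upward; and on the left face $x=\psi(t)-r$ the inequality (\ref{a9}), via Remark~\ref{remcross}, gives $\frac{d}{dt}\big(x(t,\e)-\psi(t)\big)\ge\alpha_1>0$ (using that $y\le Y^*(t,\psi(t)-r)+\omega$ is small there), so no trajectory escapes leftward. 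Since the initial point lies in $\mc R$, the solution stays in $\mc R$ on $[\bar t,T]$; in particular $0\le y(t,\e)\le\mu(r)+\omega$ there, where $\mu(r):=\sup_{t}\sup_{\xi\in[\psi(t)-r,\psi(t)]}\phi(t,\xi)\to0$ as $r\to0$.

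It remains to identify the limit on $[t_c,T]$. From part (a), dominated convergence applied to $x(t_c,\e)=x(\bar t,\e)+\int_{\bar t}^{t_c}f(s,x(s,\e),y(s,\e),\e)\,ds$ (the integrand tending to $x_\phi'(s)$) gives $x(t_c,\e)\to x_\phi(t_c)=\psi(t_c)=\bar x(t_c)$. On $[t_c,T]$ the $x$-equation $x'=f(t,x,y(t,\e),\e)$ is a uniformly small perturbation of $x'=f(t,x,0,0)$, since $0\le y(t,\e)\le\mu(r)+\omega$; hence, by a Gronwall estimate using the regularity (a1), $x(t,\e)\to\bar x(t)$ uniformly on $[t_c,T]$, the remaining hypotheses (a8)--(a9) ensuring that the reduced $y=0$-dynamics $\bar x(\cdot,\e)$ and $\bar G$ are well behaved near $t_c$. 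Finally, $Y^*$ is continuous and $Y^*(t,\bar x(t))=0$ on $[t_c,T]$ (since $\bar x(t)\ge\psi(t)$ there), so $Y^*(t,x(t,\e))\to0$ uniformly, and therefore $0\le y(t,\e)\le Y^*(t,x(t,\e))+\omega\to\omega$ uniformly on $[t_c,T]$; as $r,\omega$ are at our disposal this yields $y(t,\e)\to0$ uniformly, establishing (\ref{eq4.82e}).

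The main obstacle I foresee is the transition at $t_c$: there $y=0$ is non-hyperbolic, so the Tikhonov theorem does not apply directly, and there is an a priori circular dependence --- controlling $x(t,\e)$ needs $y$ small, while controlling $y(t,\e)$ needs $x$ on the correct side of $\psi$. Building the single forward-invariant region $\mc R$ --- whose left wall is a barrier thanks to (\ref{a9}) and whose top wall is a barrier thanks to the sign of $g$ dictated by (a5')--(a6') --- is precisely what breaks this circularity; and the fact that $\mc R$ remains valid across $t_c$ is what lets the closing Gronwall-and-continuity argument produce convergence that is uniform, rather than merely almost uniform, which is the feature distinguishing this immediate-switch statement from part (a).
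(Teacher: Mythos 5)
Your proposal is correct and follows essentially the same route as the paper's proof: part (a) by applying the Tikhonov theorem on compact subintervals of $[t_0,t_c[$ where $\phi$ is an isolated attracting quasi steady state, and part (b) by trapping the trajectory after it nears $(t_c,\psi(t_c),0)$ in a region whose left wall is a barrier by (\ref{a9}) (via Remark \ref{remcross}) and whose top wall, the composite stable quasi steady state shifted up by a fixed $\omega$ (your $Y^*+\omega$ is the paper's $\ti\phi+\omega$), is a barrier by the sign of $g$ from (a5')--(a6'), followed by a regular-perturbation argument for $x$. The only differences are cosmetic (explicit right wall, extracting $y\to 0$ from $Y^*(t,x(t,\e))\to 0$ rather than directly from the choice of constants).
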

\begin{proof}
Some technical steps of the proof are analogous to those in the proofs of Theorems \ref{th21} and \ref{th3.2} and thus here we shall give only a sketch of them.

From (a7') we see that for any $\ul t_c <t_c$ there is $\delta_{\ul t_c}$ such that $\inf_{t_0\leq t\leq \ul t_c}(\psi(t)-x_\phi(t))\geq \delta_{\ul t_c}.$  For any $0<\eta<\delta_{\ul t_c}$ define $U_\eta =\{(t,x);\; t_0\leq t\leq \ul t_c, 0\leq x\leq \psi(t)-\eta\}$. By (a5'), we have
$$
\xi_\eta = \inf\limits_{(t,x)\in U_\eta}\phi(t,x)>0
$$
and thus $\phi$ is an isolated quasi steady state on $U_\eta.$ Note that in the original formulation of the Tikhonov theorem, \cite{BaLa14, tivasv}, $U_\eta$ should be a cartesian product of $t$ and $x$ intervals, but the current situation can be easily reduced to that by the change of variables $z(t)=x(t)-\psi(t)$. Hence, (\ref{eq4.82d}) follows from the Tikhonov theorem. We observe that for any $\eta>0$ we can find $\ul t_c$ so that $y(\ul t_c,\e)<\eta$ and $\psi(\ul t_c)-\eta< x(\ul t_c,\e)<\psi(\ul t_c)+\eta$.

Now, as in (\ref{Psi}), there are $\alpha_1>0, \zeta_0, \e_0$ such that \begin{equation}
\Psi(t,\psi(t)+\zeta,y,\e)\geq \alpha_1
\label{alpha}
\end{equation}
for all $|y|\leq \zeta_0, |\zeta|<\zeta_0$, $|\e|<\e_0$.

Further, denote by $\ti \phi$ the composite stable quasi steady state: $\tilde\phi(t,x) = \phi(t,x)$ for $t_0\leq t<T, 0<x\leq \psi(t)$ and $\tilde\phi(t,x) = 0$ for $t_0\leq t\leq T, \psi(t)<x\leq M$.  Then, by (a6'), we see that $g(t,x, y,0)<0$ for $t_0\leq t\leq T, 0\leq x\leq M, \ti \phi(t,x)<y\leq N.$ Therefore, for any $\omega>0$ there is $\beta>0$ such  $g(t,x, y,0)<-\beta$ for $y\geq \ti\phi +\omega$ and thus also  $g(t,x,y,\e)\leq 0$ for $y \geq  \ti\phi +\omega$ for sufficiently small $\e$.

Now, let us take arbitrary $\zeta<\zeta_0,$ $\omega<\zeta$ and $\eta$ such that $\phi(t,\psi(t)-\eta)+\omega<\zeta$. Then we take  $\ul t_c$ such that $x(\ul t_c, \e) > \psi(\ul t_c)-\eta.$ It is clear that $y(t,\e)\leq \zeta$ for $t\geq \ul t_c$. Indeed, by (\ref{alpha}), the trajectory cannot cross back through $\{(t,x,y);\; t_0\leq t\leq T,  x  =\psi(t)-\eta, 0\leq y\leq \phi(t,\psi(t)-\eta)+\omega\}$, hence the only possibility would be to go through $\ti\phi + \omega< \eta$ for $x> \psi(t)-\eta$  but then, by the selection of constants, the trajectory would enter the region where $y'(t,\e) \leq 0$.  Thus, a standard argument shows that
$$
\lim\limits_{\e\to 0} y(t,\e) = 0,
$$
uniformly on $[\ul t_c, T]$. Then the problem
$$
x' = f(t,x,y(t,\e),\e), \quad x(\ul t_c,\e) = x(\ul t_c,\e)
$$
on $[\ul t_c, T]$ is a regular perturbation of
$$
x' = f(t,x,0,0), \quad x(t_c) = x_\phi(t_c),
$$
whose solution is $\bar x.$ Therefore (\ref{eq4.82e}) is satisfied.

Using (\ref{alpha}) we can get a more detailed picture of the solution. Indeed, we see that
$$
x(t,\e) > \psi(t)+\eta
$$
for $t< \bar t_c:=\ul t_c + 2\eta/\alpha_1$ and sufficiently small $\e$ and $(x(t,\e),y(t,\e))$ cannot cross back through
 $\{(t,x,y);\; t_0\leq t\leq T,  x  =\psi(t)+\eta, y\geq 0\},$ by $0\leq y(t,\e)\leq \zeta$ for $t\geq \ul t_c.$ Thus the solution stays in the  domain of attraction of the quasi steady state $y=0$ after $x(t,\e)$ crosses the line $x =\psi(t)$.

\end{proof}
\section{An application to predator--prey models}
\label{sec3}
\setcounter{equation}{0}
Let us consider a general mass action law model of two species interactions,
\begin{eqnarray}
x'&=& x(A+Bx+Cy),\quad x(0)= \mathring x,\nn\\
\e y'&=& y(D+Ey+Fx), \quad y(0) = \mathring y,
\label{gen1}
\end{eqnarray}
where none of the coefficients equals zero. It is natural to consider this system in the first quadrant $Q=\{(x,y);\;x\geq 0,y\geq 0\}$. It is clear that  $y=0$ is one quasi steady state, while the other is given by the formula
$$
y = \phi(t,x)=-\frac{F}{E}x -\frac{D}{E},
$$
with $\psi(t) = -D/F$.
This quasi steady state lies in $Q$ only if $-D/F>0$. Under this assumption, the geometry of Theorem \ref{th21} is realized if $-F/E>0,$ while that of Theorem \ref{mth2} if $-F/E<0$. At the same time,
$$
g_y(x,y) = D+2Ey+Fx.
$$
Hence, $g_y(x,0)<0$ if and only if $D+Fx<0$, while $g_y(x,\phi(t,x)) <0$ if and only if $D+Fx>0$.

Summarizing, for the switch to occur in the biologically relevant region, $D$ and $F$ must be of opposite sign. In what follows we use positive parameters $a,b,c,d,e,f$ do denote absolute values of capital case ones.  Then we have the following cases.\smallskip\\
\textbf{Case 1.} $D<0, F>0$.\\
\textbf{Case 1a.} $E>0$. Then the right hand side of the second equation in (\ref{gen1}) is of the form $y(-d+ey+fx)$ with $y$ describing a predatory type population but with a very specific vital dynamics. It may describe a population of sexually reproducing  generalist predator, see e.g. \cite[Section 1.5, Exercise 12]{Braun}, but  its dynamics is not very interesting -- without the prey it either dies out or suffers a blow up. Also, in the coupled case of (\ref{gen1}), the only attractive quasi steady state in $Q$ is $y=0$ for $x <d/f$ as the attracting part of $\phi$ is negative. We shall not study this case.\\
\textbf{Case 1b.} $E<0$. In this case the right hand side of the second equation of (\ref{gen1}) is of the form $y(-d-ey+fx)$ which may describe a specialist predator (one that dies out in the absence of a particular prey). In this case the second quasi steady state is given by
$$
y = \phi(x)=\frac{f}{e}x -\frac{d}{e},
$$
and the quasi steady state $y=0$ is attractive for $x<d/e$ and repelling for $x>d/e,$ where $\phi$ becomes attractive. Hence we are in the geometric setting of Theorem \ref{th21}. For its applicability, $f(x,y) = x(A+Bx+Cy)$ must be decreasing with respect to $y$, which requires $C<0$ (for $x>0$). Then the assumptions of Theorem \ref{th21} require either $A,B>0,$ or $A>0,B<0$ with $a/b>d/f$
with $0<\mathring x <d/f$, or $A<0,B>0$ with $a/b<d/f$ and $a/b<\mathring x< d/f$, as in each case the solution $\bar x$ to
\begin{equation}
x'= x(A+Bx), \quad x(0) = \mathring x
\label{bareq}
\end{equation}
crosses $d/f$ at some finite time $t_c$. We observe that $\bar x$ is increasing in all three cases. Thus, the function $G$, defined by (\ref{G}), satisfies
$$
G''(t) = f\bar x'(t) >0
$$
and thus there is a  unique $t^*>t_c$ for which $G(t^*)=0.$ Finally, we see that $g_{yy}(x,y) = -e<0$ and thus (a9) is satisfied.
\begin{figure}[htp]
\begin{center}
  % Requires \usepackage{graphicx}
  % replace aims_logo.eps by your figure file name
  \includegraphics[scale = 0.5]{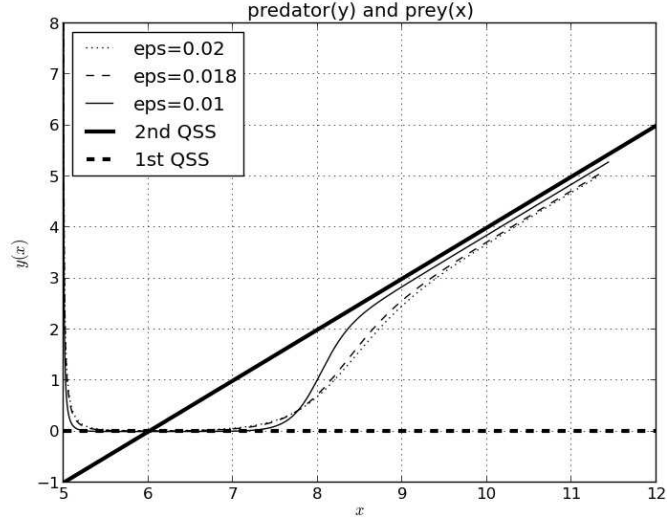}\\
  \caption{Delayed stability switch in the \textbf{Case 1b}. The orbits are traversed from left to right. }\label{incr}
  \end{center}
\end{figure}

\textbf{Case 2}. $D>0$ and $F<0$.\\
\textbf{Case 2a.} $E>0$. Then the right hand side of the second equation in (\ref{gen1}) is of the form $y(d+ey-fx),$ thus  $y$ describes a prey type population but with a specific vital dynamics: if not preyed upon, $y$ blows  up in finite time. Also, in the coupled case of (\ref{gen1}), the only attractive quasi steady state in $Q$ is $y=0$ for $x >d/f$ as the attracting part of $\phi$ for $x <d/f$ is negative. As before, we shall not study this case.\\
\textbf{Case 2b.} $E<0$. Here, the right hand side of the second equation of (\ref{gen1}) is  $y(d-ey-fx),$ which describes a prey with logistic vital dynamics. The second quasi steady state is given by
\begin{equation}
y = \phi(x)=-\frac{f}{e}x +\frac{d}{e},
\label{phi2a}
\end{equation}
and the quasi steady state $y=0$ is repelling for $x<d/e$ and attractive for $x>d/e,$  while $\phi$ is attractive for $x<d/e$. Thus the geometry of the problem is that of Theorem \ref{mth2} and we have to identify conditions on $A,B$ and $C$ that ensure that the solution $x_\phi$, see (\ref{a7'}),  originating from $\mathring x<d/f,$ crosses the line $x=d/f$ in finite time. In this case (\ref{a7'}) is given by
\begin{equation}
x' = x\left(\frac{Ae+Cd}{e} + \frac{Be-Cf}{e} x\right).
\label{bu}
\end{equation}
Consider the dynamics of this equation. If $Be-Cf=0,$ then there is only one equilibrium $x=0$ and the solution grows or decays depending on whether $Ae+Cd$ is positive or negative. If $Be-Cf\neq 0$, then there is another equilibrium, given by
 $$
 x_{eq}=-\frac{Ae+Cd}{Be-Cf}.
 $$
 The assumptions of Theorem \ref{mth2} will be satisfied if and only if $\mathring x<d/f$ and $x_{eq}>d/f$ and it is attracting, or $Be-Cf=0, Ae+Cd>0$, or  $x_{eq}\in [0,d/f[$ is repelling with $x_{eq}<\mathring x$.

To express these conditions in algebraic terms, we see that if $Be-Cf\neq 0$, then we must have
\begin{equation}
-A< B\frac{d}{f},
\label{AB}
\end{equation}
 while if $Be-Cf = 0$, then $B$ and $C$ must be of the same sign and for the solution to be increasing we must have $Ae+Cd>0$ which again  yields (\ref{AB}). Summarizing,  (\ref{AB}) is equivalent to $B=b>0, A=a>0$, or $B=b>0,A =-a<0$ {and} ${a}/{b}<d/{f},$ {or} $B=-b<0, A=a>0$ {and} ${a}/{b}>{d}/{f}.$
It is important to note that these conditions do not involve the position of $x_{eq}$. Just to recall, we must have either  $x_{eq}>d/f$ and it is attracting, or $x_{eq}<d/f$ and it is repelling (here we can think of the case $Be-Cf=0$ with $A,C>0$ as having $x_{eq}=-\infty$.)
Thus, assumptions of Theorem \ref{mth2} are satisfied  if  and only if the geometry is as in this point, (\ref{AB}) is satisfied and $\mathring x \in \,]x_{eq}, d/f[\,$ if $x_{eq}<d/f$. Then the $x$ component of the solution $(x(t,\e),y(t,\e))$ to (\ref{gen1})  grows above $d/f$ and an immediate change of stability occurs when the solution passes close to $(d/f,0)$.

We note that \textbf{Case 2b} can be transformed to a problem that satisfies the assumptions of \cite[Theorem 2]{LS1}. On the other hand, not all assumptions of \cite[Theorem 1.1]{but2004} are satisfied.

It is interesting that \textbf{Cases 1b} and \textbf{2b} have, in some sense, their duals. Consider, in the geometry of \textbf{Case 2 b}, $\mathring x>d/f$ and assume that the coefficients are such that the solution $\bar x(t)$ to (\ref{bareq}) decreases and crosses $d/f$. Then the solutions $(x_\e(t),y_\e(t))$ are first attracted by $(\bar x(t),0)$ as long as they are above $x>d/f$ and later they enter the region of attraction of (\ref{phi2a}). So, under some technical assumptions, one can expect again a delay in the exchange of stabilities. We prove this by transforming this case to \textbf{Case 1b}. Hence, consider (\ref{gen1}) in the geometric configuration of \textbf{Case 2b},
\begin{eqnarray}
x'&=& x(A+Bx+Cy),\quad x(0)= \mathring x\nn\\
\e y'&=& y(d-ey-fx), \quad y(0) = \mathring y,
\label{gen2b}
\end{eqnarray}
and assume that $\mathring{x}>0.$ Then the solution $\bar x$ to
$$
x'= x(A+Bx), \quad x(0) =\mathring x,
$$
will decrease and pass through $x =d/f$ if and only if $-A>Bd/f$ (which is equivalent to either $A=-a<0,B=-b<0$, or $A=a>0, B=-b <0$ and $a/b<d/f$, or $A=-a<0, B=b>0$ and $a/b>d/f$) and $\mathring x<a/b$ in the latter case.
Let us change the variable according to $x= -z+{2d}/{f}.$
Then the system (\ref{gen2b}) becomes
\begin{eqnarray}
z'&=& \left(z-\frac{2d}{f}\right)\left(\frac{Af+2Bd}{f}-Bz+Cy\right),\quad z(0)= \frac{2d}{f}-\mathring x<\frac{d}{f},\nn\\
\e y'&=& y(-d-ey+fz), \quad y(0) = \mathring y.
\label{gen2}
\end{eqnarray}
We observe that the second equation is the same as in \textbf{Case 1b}, so the assumptions of Theorem \ref{th21} concerning the function $g$ are satisfied. We only have to ascertain that the assumptions concerning the function $f$ of Theorem \ref{th21} also hold. We note that we consider the problem for $z<2d/f$ where the multiplier $(z-2d/f)<0$. Thus, to have (a3) we need $C=c>0$. For (a7), we observe that the equlibria of $z$ are $z_1 = 2d/f$ and
$$
z_2 = \frac{A}{B} + \frac{2d}{f}.
$$
As before, (a7) will be satisfied if $z_2<d/f$ is repelling with $\mathring z > z_2$, or $z_2>d/f$ and is attracting, or $z_2> 2d/f$ and $z_1$ is attracting. It is easy to see that the first case occurs when $A/B<-d/f$ and $B>0$, the second when $A/B>-d/f$ and $B<0$, and the last when both $A>0,B>0$. Thus, we obtain
$$
-A>B\frac{d}{f}.
$$
Since  the case when $z_2<d/f$ and it is repelling is possible if and only if  $B=b>$ and $A=-a<0$, we see  that $d/f>\mathring z  > z_2$ is equivalent to $d/f<\mathring x < a/b$.

We observe that if we consider the geometry of \textbf{Case 1 b}, but assume that $\mathring x >d/f$ and the solution to
\begin{equation}
x' = x\left(\frac{Ae-Cd}{e} + \frac{Be+Cf}{e} x\right), \quad x(0) =\mathring x
\label{bu1}
\end{equation}
is decreasing and passes through $x=d/f$, then, by the same change of variables as above, we can transform this problem to the one discussed in \textbf{Case 2b} and obtain that there is an immediate switch of stabilities as in Theorem \ref{mth2}.
\begin{figure}[htp]
\begin{center}
  % Requires \usepackage{graphicx}
  % replace aims_logo.eps by your figure file name
  \includegraphics[scale = 0.5]{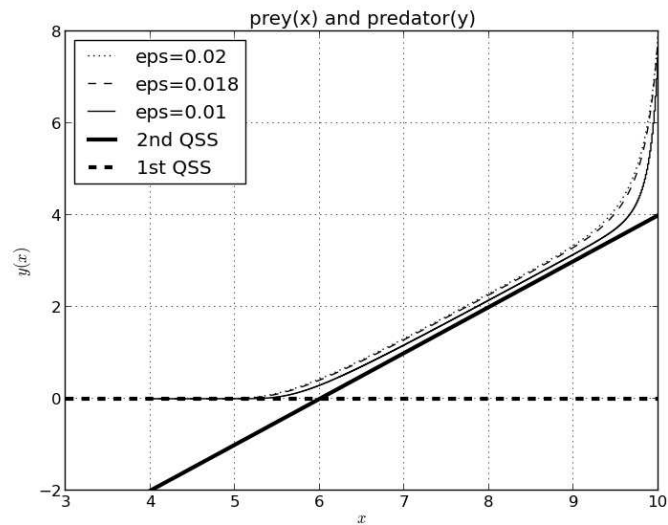}\\
  \caption{Stability switch without delay in the geometry of the case \textbf{Case 1b} with $\mathring x>d/f$. The orbits are traversed from right to left.  }\label{decr}
  \end{center}
\end{figure}

To summarize, we obtain the delayed switch of stabilities in the following six cases:\smallskip\\
{\textbf{Fast predator}}\\
\textbf{a)}
\begin{eqnarray*}
x'&=& x(a+bx-cy),\quad x(0)= \mathring x\in ]0,d/f[,\nn\\
\e y'&=& y(-d-ey+fx), \quad y(0) = \mathring y>0,
\end{eqnarray*}
\textbf{b)}
\begin{eqnarray*}
x'&=& x(a-bx-cy),\quad x(0)= \mathring x\in ]0,d/f[,\nn\\
\e y'&=& y(-d-ey+fx), \quad y(0) = \mathring y>0,
\end{eqnarray*}
with $a/b>d/f$,\\
\textbf{c)}
\begin{eqnarray*}
x'&=& x(-a+bx-cy),\quad x(0)= \mathring x\in ]a/b,d/f[,\nn\\
\e y'&=& y(-d-ey+fx), \quad y(0) = \mathring y>0,
\end{eqnarray*}
with $a/b<d/f$.\smallskip\\
{\textbf{Fast prey}}\\
\textbf{a)}
\begin{eqnarray*}
x'&=& x(-a-bx+cy),\quad x(0)= \mathring x>d/f,\nn\\
\e y'&=& y(d-ey-fx), \quad y(0) = \mathring y>0,
\end{eqnarray*}
\textbf{b)}
\begin{eqnarray*}
x'&=& x(a-bx+cy),\quad x(0)= \mathring x>d/f,\nn\\
\e y'&=& y(d-ey-fx), \quad y(0) = \mathring y>0,
\end{eqnarray*}
with $a/b<d/f$,\\
\textbf{c)}
\begin{eqnarray*}
x'&=& x(-a+bx+cy),\quad x(0)= \mathring x\in ]d/f,a/b[\nn\\
\e y'&=& y(d-ey-fx), \quad y(0) = \mathring y>0,
\end{eqnarray*}
with $a/b>d/f$.

\appendix
\section{}
\label{secB}
\textbf{Sketch of the proof of Theorem \ref{thbtz}.}
 To explain the construction of  the upper solution (\ref{upper}), first we observe that, by the Tikhonov theorem, for any $c_0>0$ (see assumption $(\alpha_5)$) and $\delta >0$ (such that $t_0+\delta<t_c$), there is an $\e(\delta)>$ such that $0<y(t_0+\delta,\e) \leq c_0$. Thus, using ($\alpha_3$), all solutions $y(t,\e)$ are nonnegative and bounded from above by the solution of (\ref{eq1}) with $\bar t =t_0+\delta$ and $\mathring v_b = c_0$. Since in the first identity of (\ref{btz2}) we have to prove the convergence on the open interval $]t_0,T],$ it is enough to prove it for any $\delta$ with the initial condition at $t_0+\delta$ being smaller than $c_0$. Thus, without loosing generality, we can assume that $y(t_0,\e)  =\mathring y \leq c_0$. Then assumption $(\alpha_5)$ asserts that the right hand side of (\ref{eq1}) is dominated by its linearization at $y=0$ as long as the solution remains small (that is, at least on $[t_0, \hat t]$ for any $\hat t<t_c$). The author then considers the linearization
$$
\epsilon \frac{d\ov Y}{dt} = g_y(t, 0,\epsilon)\ov Y,\quad \ov Y(t_0,\e)= \mathring u \in ]0,c_0],
$$
whose solution is (\ref{upper}), $\ov Y(t,\e) = \mathring u \exp \e^{-1}{G(t,\e)}.$
Crucial for the estimates are the properties of $G$. From the regularity of $g$ and $(\alpha_2)$ we see that $g_y(t,0,\e)$ is negative and separated from zero for sufficiently small $\e$ and thus, by (\ref{G}), $G(t,\e)\leq 0$ on $[t_0,t_0+\nu]$ for some small $\nu >0$. Similarly, from $(\alpha_4)$ and the regularity of $G$ with respect to $\e$
we find that there is a constant $\kappa$ such that
\begin{equation}
\frac{G(t,\e)}{\e} \leq \frac{G(t,0)}{\e} +\kappa
\label{Glip}
\end{equation}
on $[t_0, t^*]$, $\e \in I_{\e_0},$ so that
$G(t,\e)/\e<0$ on $[t_0+\nu, t^*-\nu]$ for sufficiently small $\e$. Hence $\ov Y(t,\e) \leq c_0$ on $[t_0,t^*-\nu]$ and sufficiently small $\e,$ and thus the inequality of assumption $(\alpha_5)$ can be extended on $[t_0,t^*-\nu]$. But then, again by $(\alpha_5),$ we have
$$
\epsilon \frac{d\ol Y}{dt} - g(t, \ol Y,\epsilon) = g_y(t,0,\e)\ol Y -g(t, \ol Y,\epsilon) \geq 0
$$
and $\ol Y$ is an upper solution of (\ref{eq1}). Hence
$$
0\leq \lim_{\e\to 0^+}y(t,\e)\leq \lim_{\e\to 0^+}\ol Y(t,\e) =0
$$
uniformly on $[t_0+\nu, t^*-\nu]$. Since $\nu$ was arbitrary, we obtain the first identity of (\ref{btz2}).

We can also derive an upper bound for $y(t,\e)$ for $t\in [t^*-\nu, T]$. From the above, there is $\bar \e$ such that for $\e<\bar \e$ we have $y(t^*-\nu,\e) < \phi(t^*-\nu).$ Then, as in \cite[p. 203]{tivasv} (see also \ref{siom}), we fix  (sufficiently small) $\omega$ and select $\hat\e$ so that any solution $y(t,\e)$ with $\e<\hat \e$ that enters the strip $\{(y,t);\; t\in [t^*-\nu, T], \phi(t)-\omega<y<\phi(t)+\omega\}$, stays there. Hence, we have
$$
y(t,\e)\leq \phi(t)+\omega, \quad t\in [t^*-\nu, T],
$$
for any $\e\leq \min\{\bar \e, \hat\e\}.$

To prove the second identity of (\ref{btz2}) we first have to prove that $y(t,\e)$ detaches from zero soon after $t^*$. Clearly, $\ov Y(t,\e)$ has this property as $G(t,\e)>0$ for $t>t^*$. However, this is an upper solution so its behaviour does not give any indication about the properties of $y(t,\e)$. Hence, we consider the function (\ref{ulY}), $\ul Y(t,\e) = \eta \exp \e^{-1}(G(t,\e)-\delta(t-t_0)), $
with $\eta \leq \min\{\mathring y, \min_{t\in [t^*,T]} \phi(t)\}.$ Using assumptions $(\alpha_2)$ and $(\alpha_4)$ and the implicit function theorem (first for $G(t,0)-\delta (t-t_0)$ and then for $G(t,\e)-\delta(t-t_0)$) we find that for any sufficiently small $\delta$ there exists $\e(\delta),$ such that for any $0<\e<\e(\delta)$ there is a simple root $t(\delta, \e)> t^*$  of $G(t,\e)-\delta(t-t_0) = 0$. Moreover, $t(\delta,\e)\to t^*$ as $\delta,\e \to 0$. Then we have
\begin{equation}
\ul Y(t,\e)\leq \eta \quad\mathrm{for\;} t_0\leq t\leq t(\delta,\e)
\label{2.18}
\end{equation}
with $\ul Y(t(\delta,\e),\e)=\eta.$ On the other hand
$$
\epsilon \frac{d\ul Y}{dt} - g(t, \ul Y,\epsilon) = g_y(t,0,\e)\ul Y -g(t, \ul Y,\epsilon) - \delta \ul Y.
$$
Since $0\leq \eta \leq \mathring y\leq c_0$ (see the first part of the proof), for any $y\in [0,c_0]$ we obtain, by assumption $(\alpha_3),$
$$
g(t,y,\e) = g_y(t,0,\e)y + \frac{1}{2}g_{yy}(t,y^*,\e)y^2
$$
with $0\leq y^*\leq c_0.$ Then
\begin{equation}
g_y(t,0,\e)y - g(t,y,\e) = - \frac{1}{2}g_{yy}(t,y^*,\e)y^2 \leq k y^2
\label{k}
\end{equation}
for $k = \sup_{\bar D} |g_{yy}|<\infty$ and hence
$$
\epsilon \frac{d\ul Y}{dt} - g(t, \ul Y,\epsilon) = k^2 \ul Y^2 - \delta \ul Y\leq 0
$$
on $[t_0,t(\delta,\eta)],$ provided $\eta \leq \delta/k$. Observe, that the constants are correctly defined. Indeed, $k$ depends on the properties of $g$ that are independent of $\e,$ and on $c_0,$ that is selected a priori as the constant for which assumption $(\alpha_5)$ is satisfied. Thus, it is independent of $\delta$ and $\eta$. Next, we can fix $\delta$ and $\e(\delta)$ which are related to solution of $G(t,\e)-\delta(t-t_0) = 0$ and independent of $\eta$. Finally, we can select $\eta$ to satisfy the above condition. Thus, $\ul Y$ is a subsolution of (\ref{eq1}) on $[t_0,t(\delta,\e)].$

Next we have to make these considerations independent of $\e$. Since the solution $t(\delta,\e)$ is a $C^1$ function, for a fixed $\delta$ we can consider $t(\delta) = \sup_{0<\e \leq \e(\delta)} t(\delta,\e).$ As before, $t(\delta)\to t^*$ as $\delta \to 0$. By the regularity of $g$ and second part of assumption $(\alpha_2)$ we see that $g(t,\eta,0)>0$ on $[t^*,T]$ for sufficiently small $\eta>0$ and then $g(t,\eta,\e)>0$ for sufficiently small $\e$ on $[t^*,T]$. Thus, $\ul Y(t,\e) = \eta$ is a subsolution on $[t(\delta,\e), t(\delta)]$. Hence we see that
\begin{equation}
\eta \leq y(t(\delta), \e)\leq \phi(t(\delta)) +\omega
\label{subsup}
\end{equation}
for sufficiently small  $\omega$ and for sufficiently small corresponding $\e$. Clearly, the points $(t(\delta),\eta)$ and $(t(\delta), \phi(t(\delta)) +\omega)$ are in the basin of attraction of $\phi$ and hence solutions originating from these two points converge to $\phi$ for $t>t(\delta)$. Since solutions cannot intersect we have, by (\ref{subsup}),
\begin{equation}
\lim\limits_{\e\to 0^+} y(t,\e) = \phi(t),\quad \bar t>t(\delta)
\label{limout}
\end{equation}
uniformly on $[\bar t, T]$ and thus the convergence is almost uniform on $]t(\delta), T]$. Since, however,  $t(\delta)\to t^*$ as $\delta \to 0$, we obtain  the second identity of (\ref{btz2}).

 {A closer scrutiny of the proof shows that the assumption that $g$ is a $C^2$ function with respect to all variables is too strong. Indeed, for (\ref{Glip}) we need that $g_y(t,0,\e)$ be  Lipschitz continuous in $\e \in I_{\e_0}$ uniformly in $t \in [t_0,t^*]$. Further, (\ref{k}) together with earlier calculations require $g$ to be twice continuously differentiable with respect to $y$. Finally, the construction of the root $t(\delta,\e)$ requires $G$ to be a $C^1$ function in some neighborhood of $(t^*, \e)$ for which it is sufficient that $g_u(t,0,\e)$ be a $C^1$ function in $\e$ for sufficiently small $\e,$ uniformly in $t$ in a neighbourhood of $t^*$.}

\end{document}